\newtheorem{thm}{Theorem}
\newtheorem{defi}{Definition}
\newtheorem{prop}{Proposition}
\newtheorem{lem}{Lemma}
\newtheorem{cor}{Corollary}
\newtheorem{rmrk}{Remark}
\newcommand{\IN}{\mathds{N}}
\newcommand{\IR}{\mathds{R}}
\newcommand{\IC}{\mathds{C}}
\newcommand{\I}[2]{\mathcal{I}^{#1,\rho}_{#2}}
\newcommand{\C}[2]{\mathcal{C}^{#1,\rho}_{#2}}
\newcommand{\D}[2]{\mathcal{D}^{#1,\rho}_{#2}}
\newcommand{\Hd}[2]{\mathcal{H}^{#1}_{#2}}
\newcommand{\dis}[1]{\displaystyle{#1}}
\newcommand{\up}[1]{\textsuperscript{#1}}
\renewcommand{\geq}{\geqslant}
\renewcommand{\leq}{\leqslant}
\begin{document}

\begin{center}
{\Large \textbf{Generalized Taylor formulas involving generalized fractional derivatives}}\\
\bigskip
Mondher Benjemaa
\end{center}

\begin{center}
\textit{Laboratory of Stability and Control of Systems and nonlinear PDEs, \\ Sfax University, Tunisia}
\end{center}

\hrule \vskip 5mm

\noindent \textbf{Abstract}\\

In this paper, we establish a generalized Taylor expansion of a given function $f$ in the form\\

$\dis{f(x) = \sum_{j=0}^m c_j^{\alpha,\rho}\left(x^\rho-a^\rho\right)^{j\alpha} + e_m(x)}$\\

\noindent with $m\in \IN$, $c_j^{\alpha,\rho}\in \IR$, $x>a$ and $0< \alpha \leq 1$. In case $\rho = \alpha = 1$, this expression coincides with the classical Taylor formula. The coefficients $c_j^{\alpha,\rho}$, $j=0,\dots,m$ as well as an estimation of $e_m(x)$ are given in terms of the generalized Caputo-type fractional derivatives. Some applications of these results for approximation of functions and for solving some fractional differential equations in series form are given in illustration.\\

\noindent \textbf{Keywords}: Generalized Fractional derivatives, Caputo derivatives, Taylor formula, Fractional calculus

\noindent \textit{2008 MSC}: 26A24, 26A33, 34A08, 41A58\\

\hrule

\section{Introduction}\label{intro}
Fractional calculus has intensively developed since its introduction in the seventies and is nowadays a vividly growing research field. The basic idea behind the fractional calculus is to extend to real or complex orders the classical integrals and derivatives involving integer orders. Consequently, it provides a useful and powerful tool to solve differential and integral equations as well as various mathematical and physical problems involving nonlocal effects or memory effects, such as quantum mechanics, biophysics, fluid mechanics, control theory and dynamical systems, diffusive transport equations, viscoelasticity, signal processing, probability and statistics, and so on. In the literature, several different fractional derivatives have been introduced, e.g. Riemann-Liouville, Caputo, Hadamard, Erd\'elyi-Kober, Hadamard, Gr\"unwald-Letnikov, Marchaud and Riesz among others \cite{Her14,Hil00,Mach11,Old74,Pod99,Sam93}.\\

Recently the author in \cite{Kat11} has introduced a new fractional integral which generalizes into a single form the Riemann-Liouville and the Hadamard integrals. Later on, he has shown that the generalized fractional integral operator is invertible and he has introduced in \cite{Kat14} a new fractional derivative, which generalizes the Riemann-Liouville and the Hadamard derivatives. More recently, the authors in \cite{Jar17} have studied the generalized fractional derivative in Caputo sens. Particularly, they have established that the generalized Caputo-type fractional derivative converges toward the Caputo-type Riemann-Liouville and the Caputo-type Hadamard derivatives when a parameter (denoted $\rho$) goes to zero and one respectively (see \cite[Theorem 3.11]{Jar17} or Theorem \ref{thm_cv_rho_caputo} hereafter). We propose in this work to investigate the generalized Taylor formulas involving these generalized Caputo-type fractional derivatives.\\

The paper is organized in the following way. After some definitions and notations in section \ref{sec_Notation}, we establish in section \ref{sec_taylor} the Taylor expansion of a given function by means of its generalized Caputo-type fractional derivatives. In section \ref{sec_muntz}, we provide an approximation of a given function in terms of M\"untz polynomials. Finally, we apply these results to find the solutions of some fractional differential equations (fde) in series form. We also provide in Appendix an iterative algorithm to compute the $j^{th}$ generalized fractional derivative function of the classical derivatives up to the $j^{th}$ order.


\section{Definitions and notations}\label{sec_Notation}
Throughout this paper, $\IN_0$ will denote the set of non-negative integers, $a$ and $b$ will denote two given real numbers such that $-\infty < a < b < +\infty$ and $\rho$ a positive real number. Unless otherwise mentioned, $\alpha$ will denote a complex number such that $Re(\alpha)>0$. We will use the notation $\lfloor x\rfloor$ to design the integer part of a real number $x$, that is the greatest integer less than or equal to $x$. We also define $\lfloor x \rceil = \lfloor x\rfloor +1$ if $x\not \in \IN_0$ and $\lfloor x \rceil = x$ if $x\in \IN_0$. The set of absolute continuous functions on $[a,b]$ will be denoted $AC[a,b]$. Then we define
\begin{align*}
AC^n_\gamma :=\Big\{f:[a,b]\rightarrow \IC, \ \gamma^{n-1} f \in AC[a,b]\Big\}
\end{align*}
with $\gamma := x^{1-\rho}\,\dfrac{d}{dx}$ and $AC^1_\gamma = AC[a,b]$.\\

The left-sided generalized fractional integral $\I{\alpha}{a^+}f$ of order $\alpha$ is defined for any real number $x>a$ by:
\begin{align}\label{left_int}
\I{\alpha}{a^+}f(x) = \dfrac{\rho^{1-\alpha}}{\Gamma(\alpha)}\int_a^x \dfrac{\tau^{\rho-1}\,f(\tau)}{(x^\rho-\tau^\rho)^{1-\alpha}}\ d\tau.
\end{align}
This definition is a fractional generalization of the $n$-fold left integral of the form
\begin{align*}
\I{n}{a^+}f(x) = \int_a^x t_1^{\rho-1}\, dt_1 \int_a^{t_1} t_2^{\rho-1}\, dt_2\, \dots\int_a^{t_{n-1}} t_n^{\rho-1}f(t_n)\ dt_n.
\end{align*}
Similarly, the right-sided generalized fractional integral $\I{\alpha}{b^-}f$ of order $\alpha$ is defined for any real number $x<b$ by:
\begin{align}\label{right_int}
\I{\alpha}{b^-}f(x) = \dfrac{\rho^{1-\alpha}}{\Gamma(\alpha)}\int_x^b \dfrac{\tau^{\rho-1}\,f(\tau)}{(\tau^\rho-x^\rho)^{1-\alpha}}\ d\tau.
\end{align}
%
The corresponding generalized fractional derivatives to these generalized integrals are given in what follows.
\begin{defi}
Let $\alpha\in \IC$ with $Re(\alpha)\geq 0$ and $\rho > 0$. Let \mbox{$n = \lfloor Re(\alpha)\rfloor+1$} and $f\in AC^n_\gamma[a,b]$. The generalized fractional derivatives relative to the generalized integrals \eqref{left_int} and \eqref{right_int} are given for any real number $0 \leq a < x < b \leq \infty$ respectively by:
\begin{align*}
\D{\alpha}{a^+}f(x) & := \left(x^{1-\rho}\,\dfrac{d}{dx}\right)^n\left(\I{n-\alpha}{a^+}f\right)(x) \\
& \,= \dfrac{\rho^{\alpha-n+1}}{\Gamma(n-\alpha)} \left(x^{1-\rho}\,\dfrac{d}{dx}\right)^n\int_a^x \dfrac{\tau^{\rho-1}\,f(\tau)}{(x^\rho-\tau^\rho)^{\alpha-n+1}}\ d\tau
\end{align*}
and
\begin{align*}
\D{\alpha}{b^-}f(x) & := \left(-x^{1-\rho}\,\dfrac{d}{dx}\right)^n\left(\I{n-\alpha}{b^-}f\right)(x) \\
& \,= \dfrac{\rho^{\alpha-n+1}}{\Gamma(n-\alpha)} \left(-x^{1-\rho}\,\dfrac{d}{dx}\right)^n\int_x^b \dfrac{\tau^{\rho-1}\,f(\tau)}{(\tau^\rho-x^\rho)^{\alpha-n+1}}\ d\tau.
\end{align*}
\end{defi}
It is worth noting that $\D{0}{a^+}$ and $\D{0}{b^-}$ simply reduce to the identity operator. In \cite{Jar17}, it is proven that these fractional operators are well defined on $AC^n_\gamma[a,b]$. Moreover, the generalized fractional integrals and derivatives satisfy the semigroup, the composition and the inverse property. More precisely, we have \cite{Kat11,Kat14}
\begin{prop}\label{prop_semigroup}
Let $\alpha>0$, $\beta>0$, $0 < a < b \leq +\infty$ and $\rho>0$. Then
\begin{enumerate}
\item $\I{\alpha}{a^+}\, \I{\beta}{a^+}f = \I{\alpha+\beta}{a^+}f$ and $\D{\alpha}{a^+}\, \D{\beta}{a^+}f = \D{\alpha+\beta}{a^+}f$.
\item $\D{\alpha}{a^+}\, \I{\beta}{a^+}f = \I{\beta-\alpha}{a^+}f\ $ if $\ \alpha < \beta$.
\item $\D{\alpha}{a^+}\, \I{\alpha}{a^+}f = f$. 
\end{enumerate}
\end{prop}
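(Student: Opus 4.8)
The plan is to reduce all three identities to the corresponding properties of the classical Riemann--Liouville operators by means of the power substitution $u = x^\rho$. Concretely, for a function $f$ on $[a,b]$ I would introduce the pullback $(Qf)(u) := f(u^{1/\rho})$, defined on $[a^\rho,b^\rho]$, whose inverse is $(Q^{-1}g)(x) = g(x^\rho)$. Writing $J^\alpha$ and $D^\alpha$ for the ordinary left Riemann--Liouville integral and derivative of order $\alpha$ based at $a^\rho$, the first step is the change of variables $v = \tau^\rho$ (so that $\tau^{\rho-1}\,d\tau = dv/\rho$) in the defining integral \eqref{left_int}; after the constant $\rho^{1-\alpha}/\rho = \rho^{-\alpha}$ is pulled out, this yields the conjugation formula
\[
\I{\alpha}{a^+}f = \rho^{-\alpha}\,Q^{-1}\,J^\alpha\,Q f .
\]

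The second step is the analogous statement for the derivative, where the key observation is that the generating operator $\gamma = x^{1-\rho}\,\frac{d}{dx}$ transforms, under $u = x^\rho$, into the plain derivative scaled by $\rho$: since $\frac{d}{dx} = \rho\, x^{\rho-1}\,\frac{d}{du}$, one gets $\gamma = \rho\,\frac{d}{du}$, hence $\gamma^n = \rho^n\,\frac{d^n}{du^n}$. Combining this with the integral formula above and the definition $\D{\alpha}{a^+} = \gamma^n\,\I{n-\alpha}{a^+}$ gives, once the factors $\rho^n$ and $\rho^{\alpha-n}$ collapse,
\[
\D{\alpha}{a^+}f = \rho^{\alpha}\,Q^{-1}\,D^\alpha\,Q f ,
\]
where $D^\alpha = \frac{d^n}{du^n}\,J^{n-\alpha}$ is the classical Riemann--Liouville derivative.

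With these two conjugation identities in hand, each assertion follows by inserting $Q Q^{-1} = \mathrm{id}$ between the composed operators and invoking the classical fact. For (1), $\I{\alpha}{a^+}\I{\beta}{a^+}f = \rho^{-\alpha-\beta}\,Q^{-1}J^\alpha J^\beta Q f = \rho^{-(\alpha+\beta)}Q^{-1}J^{\alpha+\beta}Qf = \I{\alpha+\beta}{a^+}f$ by the RL semigroup law, and symmetrically $\D{\alpha}{a^+}\D{\beta}{a^+}f = \rho^{\alpha+\beta}Q^{-1}D^\alpha D^\beta Qf = \D{\alpha+\beta}{a^+}f$. For (2), the powers of $\rho$ combine to $\rho^{\alpha-\beta} = \rho^{-(\beta-\alpha)}$ and the classical $D^\alpha J^\beta = J^{\beta-\alpha}$ for $\alpha<\beta$ delivers $\I{\beta-\alpha}{a^+}f$. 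For (3), the powers of $\rho$ cancel exactly and $D^\alpha J^\alpha = \mathrm{id}$ returns $Q^{-1}Qf = f$.

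The routine part is the bookkeeping of the $\rho$-powers; the genuine content, which I would treat as imported from the classical theory (and is the basis for the citations to \cite{Kat11,Kat14}), is the set of Riemann--Liouville identities being used. Among these the most delicate are the derivative semigroup $D^\alpha D^\beta = D^{\alpha+\beta}$ and the inverse law $D^\alpha J^\alpha = \mathrm{id}$: both hold only after the boundary terms produced by differentiating an RL integral are controlled, which is precisely why the hypothesis $f\in AC^n_\gamma[a,b]$ is imposed. The main obstacle is therefore to verify that the intermediate functions ($Qf$, $J^\beta Q f$, and so on) remain in the correct absolutely continuous class so that these classical composition laws apply; once the substitution $u=x^\rho$ is recognized as an isomorphism of the relevant spaces, carrying $AC^n_\gamma[a,b]$ onto the usual $AC^n[a^\rho,b^\rho]$, the argument closes.
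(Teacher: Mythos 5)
You should know at the outset that the paper does not actually prove this proposition: it is quoted as known, with the argument deferred to \cite{Kat11,Kat14}, where the identities are obtained by direct manipulation of the defining integrals (Fubini/Dirichlet's formula for the composition of the integrals). Your conjugation argument is therefore a genuinely different route, and a clean one. The two transfer identities are correct: the substitution $v=\tau^\rho$ in \eqref{left_int} gives $\I{\alpha}{a^+}f=\rho^{-\alpha}Q^{-1}J^{\alpha}Qf$, the relation $Q\gamma=\rho\,\frac{d}{du}\,Q$ holds, and hence $\D{\alpha}{a^+}f=\rho^{\alpha}Q^{-1}D^{\alpha}Qf$; moreover, since $a>0$, the map $x\mapsto x^{\rho}$ is a bi-Lipschitz diffeomorphism of $[a,b]$ onto $[a^{\rho},b^{\rho}]$, so $Q$ does identify $AC^{n}_{\gamma}[a,b]$ with $AC^{n}[a^{\rho},b^{\rho}]$. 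With these in hand, items (2), (3) and the integral half of item (1) follow from classical facts that are \emph{unconditional} for integrable functions: $J^{\alpha}J^{\beta}=J^{\alpha+\beta}$, $D^{\alpha}J^{\beta}f=\frac{d^{n}}{du^{n}}J^{n+\beta-\alpha}f=J^{\beta-\alpha}f$, and $D^{\alpha}J^{\alpha}f=\frac{d^{n}}{du^{n}}J^{n}f=f$. Note that, contrary to your closing remark, these last two involve no boundary-term issues at all; they are the easy ones. What your approach buys is a reusable transfer principle: every algebraic identity of Riemann--Liouville operators passes to the generalized operators with powers of $\rho$ as the only bookkeeping, which is more illuminating than the computational proofs in the cited sources.

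The one real gap is the derivative half of item (1). The classical index law $D^{\alpha}D^{\beta}=D^{\alpha+\beta}$ is \emph{not} a theorem of Riemann--Liouville theory on $AC^{n}$: in general
\begin{align*}
D^{\alpha}D^{\beta}f \;=\; D^{\alpha+\beta}f \;-\; \sum_{j=1}^{\lceil\beta\rceil}\bigl(D^{\beta-j}f\bigr)(a^{+})\,\dfrac{(x-a)^{-j-\alpha}}{\Gamma(1-j-\alpha)},
\end{align*}
and the boundary terms need not vanish for absolutely continuous $f$. Concretely, take $f\equiv 1$ and $\beta=1$: then $D^{\alpha}D^{1}1=0$ while $D^{\alpha+1}1=(x-a)^{-\alpha-1}/\Gamma(-\alpha)\neq 0$ for non-integer $\alpha$; via your own conjugation the same failure occurs for $\D{\alpha}{a^+}\D{1}{a^+}1$ versus $\D{\alpha+1}{a^+}1$. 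So your assertion that the hypothesis $f\in AC^{n}_{\gamma}[a,b]$ ``controls the boundary terms'' is an over-claim, and no argument can close it, because the derivative semigroup law as stated (for all $\alpha,\beta>0$ without side conditions) is false; it is asserted in the same unconditional form in \cite{Kat14}, so this defect is inherited from the source rather than introduced by you. Your reduction does prove it faithfully in the regimes where the classical law holds (for instance $0<\alpha,\beta<1$ with $f\in AC_{\gamma}[a,b]$, which is all that the rest of the paper uses, or whenever the relevant boundary terms vanish), and that is the honest content of item (1) for derivatives.
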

All theses properties remain true in case one replace the left-sided by right-sided generalized integrals and derivatives. The following Theorem gives a link between the generalized fractional integrals and derivatives and those of Riemann-Liouville and Hadamard.
\begin{thm}\label{thm_cv_rho}
Let $\alpha\in \IC$ such that $Re(\alpha)\geq 0$, $n= \lfloor Re(\alpha) \rceil$ and $\rho>0$. Then for $x>a$
\begin{enumerate}
\item $\dis{\lim_{\rho\to 1} \I{\alpha}{a^+}f(x) = \dfrac{1}{\Gamma(\alpha)}\int_a^x \dfrac{f(\tau)}{(x-\tau)^{1-\alpha}}\,d\tau}$.
\item $\dis{\lim_{\rho\to 0^+} \I{\alpha}{a^+}f(x) = \dfrac{1}{\Gamma(\alpha)}\int_a^x \left(\log\dfrac{x}{\tau}\right)^{\alpha-1}f(\tau)\,\dfrac{d\tau}{\tau}}$.
\item $\dis{\lim_{\rho\to 1} \D{\alpha}{a^+}f(x) = \dfrac{1}{\Gamma(n-\alpha)}\left(\dfrac{d}{dx}\right)^n\int_a^x \dfrac{f(\tau)}{(x-\tau)^{\alpha-n+1}}\,d\tau}$.
\item $\dis{\lim_{\rho\to 0^+} \D{\alpha}{a^+}f(x) = \dfrac{1}{\Gamma(\alpha)}\left(x\dfrac{d}{dx}\right)^n\int_a^x \left(\log\dfrac{x}{\tau}\right)^{n-\alpha-1}f(\tau)\,\dfrac{d\tau}{\tau}}$.
\end{enumerate}

\end{thm}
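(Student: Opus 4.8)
The plan is to establish each of the four limits by a direct substitution argument, reducing the generalized operators to their classical Riemann–Liouville and Hadamard counterparts. The two integral statements (1) and (2) are the foundation: I would prove them first and then deduce the derivative statements (3) and (4) by applying the same limiting analysis to the definition $\D{\alpha}{a^+}f = (x^{1-\rho}\,\tfrac{d}{dx})^n\,\I{n-\alpha}{a^+}f$.

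For statement (1), I would start from the definition \eqref{left_int} and examine the behaviour of the kernel as $\rho \to 1$. The prefactor $\rho^{1-\alpha} \to 1$, the weight $\tau^{\rho-1} \to 1$, and the key term is the kernel $(x^\rho-\tau^\rho)^{\alpha-1}$; I would argue that $x^\rho - \tau^\rho \to x - \tau$ pointwise on $(a,x)$, so the integrand converges to $(x-\tau)^{\alpha-1} f(\tau)/\Gamma(\alpha)$, which is exactly the Riemann–Liouville kernel. The core analytic issue is justifying the interchange of limit and integral, since the kernel is singular at $\tau = x$; I would control this via a dominated-convergence or uniform-integrability argument, bounding $(x^\rho-\tau^\rho)^{\alpha-1}$ uniformly in $\rho$ near $1$ by an integrable majorant (for instance by the mean-value estimate $x^\rho - \tau^\rho \geq c\,(x-\tau)$ on a neighbourhood of $\rho=1$). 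For statement (2), the natural device is the change of variables $\rho \to 0^+$ combined with the identity $\tfrac{x^\rho - \tau^\rho}{\rho} \to \log(x/\tau)$; writing $(x^\rho-\tau^\rho)^{\alpha-1} = \rho^{\alpha-1}\bigl(\tfrac{x^\rho-\tau^\rho}{\rho}\bigr)^{\alpha-1}$ and absorbing the $\rho^{\alpha-1}$ factor against the prefactor $\rho^{1-\alpha}$ should produce the Hadamard kernel $(\log(x/\tau))^{\alpha-1}$, while $\tau^{\rho-1}\,d\tau \to d\tau/\tau$ supplies the correct measure.

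For the derivative statements (3) and (4), I would combine the just-proved integral limits with the outer differential operator. The subtlety is that the operator $x^{1-\rho}\,\tfrac{d}{dx}$ itself depends on $\rho$: as $\rho \to 1$ it tends to $\tfrac{d}{dx}$, giving the Riemann–Liouville derivative in (3), whereas as $\rho \to 0^+$ it tends to $x\,\tfrac{d}{dx}$, giving the Hadamard derivative in (4). Here I would apply the limit to $\I{n-\alpha}{a^+}f$ (replacing $\alpha$ by $n-\alpha$ in statements (1)–(2)) and then pass the limit through the $n$ applications of the differential operator; since the theorem is quoted from \cite[Theorem 3.11]{Jar17}, I may legitimately assume the requisite smoothness ($f \in AC^n_\gamma[a,b]$) that lets the limit commute with differentiation.

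The main obstacle I anticipate is precisely the justification of the limit–integral and limit–derivative interchanges in the presence of the singular kernel, rather than any of the pointwise algebraic convergences, which are elementary. A clean way to sidestep repeated difficulties would be to prove a single uniform bound on the family of kernels $\{(x^\rho-\tau^\rho)^{\alpha-1}\}$ for $\rho$ in a neighbourhood of the relevant limit point, valid up to the singularity, and invoke dominated convergence once; the derivative cases then follow by the same bound applied to $n-\alpha$ in place of $\alpha$, together with continuity of the coefficient $\rho^{1-\rho}$ and of the operator family in $\rho$.
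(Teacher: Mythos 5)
Your proposal cannot be compared against an internal argument, because the paper does not actually prove this theorem: its entire ``proof'' is the citation ``See \cite[Theorem 4.1]{Kat14}.'' So your sketch must stand on its own. For parts (1) and (2) it essentially does: the pointwise kernel limits are elementary, and your proposed majorants work. Near $\rho=1$ the mean value theorem gives $x^\rho-\tau^\rho=\rho\,\xi^{\rho-1}(x-\tau)\geq c\,(x-\tau)$ with $c>0$ uniform in $\rho$ and $\tau$, and near $\rho=0^+$ one has $\dfrac{x^\rho-\tau^\rho}{\rho}=\int_\tau^x s^{\rho-1}\,ds\geq \min(a^\rho,x^\rho)\log\dfrac{x}{\tau}\geq c\,\log\dfrac{x}{\tau}$ (note this step needs $a>0$, which is anyway forced by the Hadamard setting), so dominated convergence applies exactly as you say.

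The genuine gap is in your passage from (1)--(2) to (3)--(4). You propose to ``pass the limit through the $n$ applications of the differential operator,'' justified only by the smoothness $f\in AC^n_\gamma[a,b]$. That justification is not valid: what (1)--(2) give you is convergence of the functions $g_\rho:=\I{n-\alpha}{a^+}f$, and convergence of a family of functions --- even uniform convergence --- never implies convergence of their derivatives (think of $\sin(kx)/k\to 0$), while it is precisely the derivatives $\left(x^{1-\rho}\frac{d}{dx}\right)^n g_\rho$ whose limit you need. Nor can you repair this by differentiating under the integral sign and re-running dominated convergence: one $x$-derivative of the kernel $(x^\rho-\tau^\rho)^{n-\alpha-1}$ already produces an exponent with real part at most $-1$, which is not integrable near $\tau=x$, so that computation is meaningless. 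The correct repair --- and the real role of the hypothesis $f\in AC^n_\gamma$ --- is to eliminate the outer derivatives \emph{before} taking the limit, by integrating by parts $n$ times, i.e.\ by invoking the decomposition (a consequence of Propositions \ref{prop_gen_der_express} and \ref{prop_der-poly}, proved in \cite{Jar17})
\begin{align*}
\D{\alpha}{a^+}f(x) = \sum_{k=0}^{n-1}\dfrac{\left(\gamma^k f\right)(a)}{\Gamma(k-\alpha+1)}\left(\dfrac{x^\rho-a^\rho}{\rho}\right)^{k-\alpha} + \I{n-\alpha}{a^+}\left(\gamma^n f\right)(x).
\end{align*}
No derivative now stands outside an integral: the boundary terms have elementary limits as $\rho\to 1$ or $\rho\to 0^+$, and the integral term is handled by your own dominated-convergence argument, with the single extra observation that the integrand also depends on $\rho$ through $\gamma^n f=\sum_{i}\lambda_{i,n}\,x^{i-n\rho}f^{(i)}(x)$ (cf.\ Lemma \ref{lem_DL}), which converges boundedly on $[a,x]$ to $f^{(n)}$ as $\rho\to 1$ and to $\left(x\frac{d}{dx}\right)^n f$ as $\rho\to 0^+$. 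With that modification your plan becomes a complete proof; as written, steps (3)--(4) do not go through. (Carrying this out would also show that the constant in item 4 of the statement should be $\frac{1}{\Gamma(n-\alpha)}$ rather than $\frac{1}{\Gamma(\alpha)}$, a typo in the paper's statement.)
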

\begin{proof}
See \cite[Theorem 4.1]{Kat14}.
\end{proof}
Apart from that the Riemann--Liouville fractional derivatives of a constant is not equal to zero, there is no known physical interpretation for the expressions $\lim_{x\to 0^+}\left(\mathcal{D}^{\alpha-k}_{0^+}f\right)(x)$, $k=1,\dots,n$. For these reasons, the Riemann--Liouville fractional derivatives are not well suited to model applied problems \cite{Luch07}. To overcome these difficulties, Caputo introduced a modification to the Riemann--Liouville fractional derivative \cite{Cap67} which is commonly known as the Caputo-type (or the Caputo modified, or simply the Caputo) fractional derivative. In \cite{Kat11}, a generalized Caputo-type fractional derivative was introduced and extensively investigated in \cite{Jar17}. Below we recall some definitions and properties of these generalized Caputo-type fractional derivatives.
\begin{defi}
Let $Re(\alpha) \geq 0$ and $n = \lfloor Re(\alpha) \rceil$. Let $f \in AC^n_\gamma[a, b]$, where $0 < a < b < +\infty$. The left and right generalized Caputo fractional derivatives of order $\alpha$ of $f$ are defined by:
\begin{align*}
\C{\alpha}{a^+}f(x) := \D{\alpha}{a^+}\left(f(t) - \sum_{k=0}^{n-1} \dfrac{\left(\gamma^{k}f\right)(a)}{k!}\left(\dfrac{t^\rho-a^\rho}{\rho}\right)^k\right)(x) \\ 
\C{\alpha}{b^-}f(x) := \D{\alpha}{b^-}\left(f(t) - \sum_{k=0}^{n-1} \dfrac{(-1)^k\,\left(\gamma^{k}f\right)(b)}{k!}\left(\dfrac{b^\rho-t^\rho}{\rho}\right)^k\right)(x) 
\end{align*}
(we recall that $\gamma := x^{1-\rho}\,\dfrac{d}{dx}$, $\gamma^k = \underbrace{\gamma\circ\dots\circ\gamma}_{k \text{ times}}$ and $\gamma^0 = id$).
\end{defi}
We have the following result.
\begin{prop}\label{prop_gen_der_express}
Let $Re(\alpha) > 0$ and $n = \lfloor Re(\alpha) \rceil$. Let $f \in AC^n_\gamma[a, b]$, where $0 < a < b < +\infty$.
\begin{enumerate}
\item If $\alpha \not \in \IN_0$, then
\begin{align}\label{cap_der_left_int_reel}
\C{\alpha}{a^+}f(x) = \dfrac{\rho^{\alpha-n+1}}{\Gamma(n-\alpha)}\int_a^x \dfrac{\tau^{\rho-1} \left(\gamma^n f\right)(\tau)}{\left(x^\rho-\tau^\rho\right)^{\alpha-n+1}}\ d\tau = \I{n-\alpha}{a^+}\left(\gamma^{n}f\right)(x)
\end{align}
and
\begin{align*}
\C{\alpha}{b^-}f(x) = \dfrac{(-1)^n\,\rho^{\alpha-n+1}}{\Gamma(n-\alpha)}\int_x^b \dfrac{\tau^{\rho-1} \left(\gamma^n f\right)(\tau)}{\left(\tau^\rho-x^\rho\right)^{\alpha-n+1}}\ d\tau = (-1)^n\,\I{n-\alpha}{b^-}\left(\gamma^{n}f\right)(x).
\end{align*}
\item If $\alpha=n \in \IN_0$, then
\begin{align*}
\C{n}{a^+}f(x) = \D{n}{a^+}f(x) = \left(\gamma^n\,f\right)(x)
\end{align*}
and
\begin{align*}
\C{n}{b^-}f(x) = \D{n}{b^-}f(x) = (-1)^n\left(\gamma^n\,f\right)(x).
\end{align*}
In particular
\begin{align*}
\C{0}{a^+}f = \D{0}{a^+}f = f \quad \text{and} \quad \C{0}{b^-}f = \D{0}{b^-}f = f.
\end{align*}
\end{enumerate}
\end{prop}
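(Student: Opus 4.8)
My plan is to reduce both assertions to their classical Riemann--Liouville/Caputo analogues by means of the change of variable $u = t^\rho/\rho$, which transports the generalized operators onto the standard ones. Writing $\tilde f(u) := f\big((\rho u)^{1/\rho}\big)$ and $a_\rho := a^\rho/\rho$, $b_\rho := b^\rho/\rho$, a direct computation from \eqref{left_int} shows that $\gamma = x^{1-\rho}\,\tfrac{d}{dx}$ becomes $\tfrac{d}{du}$, that $\tfrac{t^\rho-a^\rho}{\rho}$ becomes $u-a_\rho$, and that $\I{\beta}{a^+}$ is carried to the classical left Riemann--Liouville integral $I^{\beta}_{a_\rho^+}$ in the variable $u$ (the prefactor $\rho^{1-\beta}$ and the weight $\tau^{\rho-1}\,d\tau = dv$ cancel all $\rho$-dependence in the kernel, since $(x^\rho-\tau^\rho)^{\beta-1} = \rho^{\beta-1}(u-v)^{\beta-1}$). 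Hence $\D{\alpha}{a^+} = \gamma^n \I{n-\alpha}{a^+}$ is transported to the classical Riemann--Liouville derivative $D^\alpha_{a_\rho^+}$, and since $\gamma^k f(a) \mapsto \tilde f^{(k)}(a_\rho)$, the generalized Caputo derivative $\C{\alpha}{a^+}f$ is transported to the classical Caputo derivative ${}^{C}\!D^\alpha_{a_\rho^+}\tilde f$.

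Under this dictionary, establishing \eqref{cap_der_left_int_reel} reduces to the classical identity that, for $g := \tilde f - \sum_{k=0}^{n-1}\tfrac{\tilde f^{(k)}(a_\rho)}{k!}(u-a_\rho)^k$, one has ${}^{C}\!D^\alpha_{a_\rho^+}\tilde f = D^\alpha_{a_\rho^+} g = I^{n-\alpha}_{a_\rho^+}\tilde f^{(n)}$. I would prove this by writing $D^\alpha_{a_\rho^+} g(u) = \tfrac{1}{\Gamma(n-\alpha)}\,\tfrac{d^n}{du^n}\!\int_{a_\rho}^{u}(u-v)^{n-\alpha-1} g(v)\,dv$ and integrating by parts $n$ times, transferring each derivative from the kernel onto $g$. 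The decisive observation is that the polynomial subtracted in the definition of the Caputo derivative is precisely the degree-$(n-1)$ Taylor polynomial of $\tilde f$ at $a_\rho$, so that $g^{(k)}(a_\rho) = 0$ for $k = 0,\dots,n-1$; this annihilates every boundary term generated by the integrations by parts and leaves only $\tfrac{1}{\Gamma(n-\alpha)}\int_{a_\rho}^{u}(u-v)^{n-\alpha-1}\tilde f^{(n)}(v)\,dv = I^{n-\alpha}_{a_\rho^+}\tilde f^{(n)}(u)$. Translating back, $\tilde f^{(n)}$ becomes $\gamma^n f$ and $I^{n-\alpha}_{a_\rho^+}$ becomes $\I{n-\alpha}{a^+}$, which is exactly the right-hand side of \eqref{cap_der_left_int_reel}; the right-sided formula follows from the same argument applied to the right-sided operators, the factor $(-1)^n$ arising from the operator $-\gamma$ together with the reversed orientation.

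The integer case $\alpha = n \in \IN_0$ of part (2) is immediate: here $\I{0}{a^+}$ is the identity, so $\D{n}{a^+}f = \gamma^n f$, and since the subtracted polynomial has degree $n-1$ in $\tfrac{t^\rho-a^\rho}{\rho}$ it is annihilated by $\gamma^n$, whence $\C{n}{a^+}f = \gamma^n f = \D{n}{a^+}f$ (with the sign $(-1)^n$ in the right-sided case); the case $\alpha = 0$ returns the identity operator, as already observed. The only genuinely delicate point in the whole argument is the bookkeeping in the $n$-fold integration by parts: one must ensure that $g$ together with its first $n-1$ derivatives are absolutely continuous, so that the boundary terms make sense and the singularity $(u-v)^{n-\alpha-1}$ (integrable because $Re(\alpha) < n$) causes no difficulty. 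This is guaranteed by the hypothesis $f \in AC^n_\gamma[a,b]$, which under the change of variable states exactly that $\tilde f \in AC^n[a_\rho, b_\rho]$. One could instead avoid the change of variable and iterate the operator identity $\gamma\,\I{\beta}{a^+}g = \I{\beta}{a^+}(\gamma g) + \tfrac{g(a)}{\Gamma(\beta)}\big(\tfrac{x^\rho-a^\rho}{\rho}\big)^{\beta-1}$ $n$ times, but I expect the change-of-variable route to be cleanest, since it imports the classical result directly.
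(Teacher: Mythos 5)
Your proof is correct, but it necessarily takes a different route from the paper, because the paper does not prove this proposition at all: it is stated without proof, recalled from \cite{Jar17}, where it is established by direct manipulation of the generalized operators in the original variable $x$. Your transport argument is exact and self-contained: with $u=x^\rho/\rho$, $v=\tau^\rho/\rho$ one has $\tau^{\rho-1}\,d\tau=dv$ and $(x^\rho-\tau^\rho)^{\beta-1}=\rho^{\beta-1}(u-v)^{\beta-1}$, so the prefactor $\rho^{1-\beta}$ in \eqref{left_int} cancels and $\I{\beta}{a^+}$ becomes the classical left Riemann--Liouville integral $I^{\beta}_{a_\rho^+}$; $\gamma$ becomes $d/du$; the subtracted polynomial in the definition of $\C{\alpha}{a^+}$ becomes the Taylor polynomial of $\tilde f$ at $a_\rho=a^\rho/\rho$; and, since $a>0$ makes $x\mapsto x^\rho/\rho$ bi-Lipschitz on $[a,b]$, $f\in AC^n_\gamma[a,b]$ if and only if $\tilde f\in AC^n[a_\rho,b_\rho]$. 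This reduces \eqref{cap_der_left_int_reel} to the textbook identity ${}^{C}\!D^{\alpha}_{a_\rho^+}\tilde f=I^{n-\alpha}_{a_\rho^+}\tilde f^{(n)}$, which your $n$-fold integration by parts does prove; the only step worth writing out explicitly is the final interchange of $d^n/du^n$ with the integral, handled most cleanly by noting that the integrations by parts give $I^{n-\alpha}_{a_\rho^+}g=I^{2n-\alpha}_{a_\rho^+}g^{(n)}=I^{n}_{a_\rho^+}\bigl(I^{n-\alpha}_{a_\rho^+}g^{(n)}\bigr)$ (valid because $g^{(k)}(a_\rho)=0$ for $k=0,\dots,n-1$) and then applying $\tfrac{d^n}{du^n}I^{n}_{a_\rho^+}=\mathrm{id}$. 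Your approach buys quite a lot: the same dictionary yields essentially one-line proofs of Propositions \ref{prop_semigroup}, \ref{prop_comp_cap_I_der} and \ref{prop_der-poly}, and of the Taylor formulas of Section \ref{sec_taylor}, by importing their classical counterparts; what the direct computation of \cite{Jar17} buys is that it never leaves the original variable, so no transport of function spaces or endpoint values needs to be justified, and it extends to situations where no such substitution is available. One caveat, shared by your parenthetical ``integrable because $Re(\alpha)<n$'' and by the statement itself: part (1) really requires $Re(\alpha)\notin\IN_0$ rather than merely $\alpha\notin\IN_0$, since for, say, $\alpha=1+i$ one has $n=1$ and $Re(n-\alpha)=0$, so the kernel $(u-v)^{n-\alpha-1}$ is not integrable and $\I{n-\alpha}{a^+}$ is undefined.
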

The generalized Caputo-type fractional derivatives satisfy the following composition properties.
\begin{prop}\label{prop_comp_cap_I_der}
Let $\alpha\in \IC$ and $n= \lfloor Re(\alpha) \rceil$. For $f\in AC^n_\gamma[a,b]$ we have
\begin{align}\label{comp_cap_I_der_left}
\I{\alpha}{a^+}\C{\alpha}{a^+}f(x) = f(x) - \sum_{k=0}^{n-1} \dfrac{\left(\gamma^k f\right)(a)}{k!}\left(\dfrac{x^\rho-a^\rho}{\rho}\right)^k
\end{align}
and
\begin{align}\label{comp_cap_I_der_right}
\I{\alpha}{b^-}\C{\alpha}{b^-}f(x) = f(x) - \sum_{k=0}^{n-1} \dfrac{(-1)^k\,\left(\gamma^k f\right)(b)}{k!}\left(\dfrac{b^\rho-x^\rho}{\rho}\right)^k.
\end{align}
In addition, if $f\in AC^{n+m}_\delta[a,b]$ and $\alpha\geq 0$ and $\beta\geq 0$ are such that $n-1 < \alpha \leq n$ and $m-1 < \beta \leq m$, then
\begin{align*}
\C{\alpha}{a^+}\C{\beta}{a^+}f = \C{\alpha+\beta}{a^+}f  \quad \text{and}\quad  \C{\alpha}{b^-}\C{\beta}{b^-}f = \C{\alpha+\beta}{b^-}f.
\end{align*}
\end{prop}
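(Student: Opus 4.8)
The plan is to derive both composition formulas \eqref{comp_cap_I_der_left}--\eqref{comp_cap_I_der_right} from the representation of the Caputo derivative as a fractional integral of $\gamma^n f$ given in Proposition \ref{prop_gen_der_express}, and then to obtain the semigroup property by feeding these formulas into the composition rules of Proposition \ref{prop_semigroup}. For \eqref{comp_cap_I_der_left} I would start from $\C{\alpha}{a^+}f = \I{n-\alpha}{a^+}(\gamma^n f)$ and apply $\I{\alpha}{a^+}$; the semigroup property $\I{\alpha}{a^+}\I{n-\alpha}{a^+} = \I{n}{a^+}$ collapses the composition to $\I{n}{a^+}(\gamma^n f)$, so everything reduces to the integer-order identity
\[
\I{n}{a^+}(\gamma^n f)(x) = f(x) - \sum_{k=0}^{n-1}\frac{(\gamma^k f)(a)}{k!}\left(\frac{x^\rho-a^\rho}{\rho}\right)^k ,
\]
which is a generalized Taylor formula in disguise and forms the analytic core of the argument.

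To prove this identity I would first record the two fundamental-theorem relations for $\gamma = x^{1-\rho}\frac{d}{dx}$ and its one-fold antiderivative $\I{1}{a^+}g(x) = \int_a^x \tau^{\rho-1}g(\tau)\,d\tau$, namely $\gamma\,\I{1}{a^+}g = g$ and $\I{1}{a^+}(\gamma g) = g - g(a)$; both are immediate from the chain rule since $\gamma\,\I{1}{a^+}g(x) = x^{1-\rho}x^{\rho-1}g(x)$. The second is the base case of an induction on $n$. For the inductive step I would factor $\I{n}{a^+} = \I{1}{a^+}\I{n-1}{a^+}$, apply the hypothesis to $\gamma g$, and integrate the resulting generalized monomials using
\[
\gamma\left(\frac{x^\rho-a^\rho}{\rho}\right)^k = k\left(\frac{x^\rho-a^\rho}{\rho}\right)^{k-1}, \qquad \I{1}{a^+}\left(\frac{x^\rho-a^\rho}{\rho}\right)^k = \frac{1}{k+1}\left(\frac{x^\rho-a^\rho}{\rho}\right)^{k+1},
\]
the second following from the substitution $u = (\tau^\rho-a^\rho)/\rho$. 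Reindexing the sum reproduces the claim. The right-sided formula \eqref{comp_cap_I_der_right} is handled identically, the only changes being the sign conventions built into $\D{\alpha}{b^-}$ and the reflected monomials $\left(\frac{b^\rho-x^\rho}{\rho}\right)^k$; the factors $(-1)^n$ from Proposition \ref{prop_gen_der_express} and the $(-1)^k$ in the Taylor polynomial cancel consistently.

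For the semigroup property $\C{\alpha}{a^+}\C{\beta}{a^+}f = \C{\alpha+\beta}{a^+}f$ I would apply $\I{\alpha+\beta}{a^+} = \I{\alpha}{a^+}\I{\beta}{a^+}$ to the left-hand side. Writing $h := \C{\beta}{a^+}f$ and using commutativity of the $\I{}{}$ semigroup, I get $\I{\alpha+\beta}{a^+}\C{\alpha}{a^+}\C{\beta}{a^+}f = \I{\beta}{a^+}\big(\I{\alpha}{a^+}\C{\alpha}{a^+}h\big)$; applying \eqref{comp_cap_I_der_left} for order $\alpha$ on $h$, and then once more for order $\beta$ on $f$, expresses this as $f$ minus a generalized Taylor polynomial plus boundary corrections built from the values $(\gamma^k h)(a)$. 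Comparing with $\I{\alpha+\beta}{a^+}\C{\alpha+\beta}{a^+}f = f - \sum_{k=0}^{p-1}\frac{(\gamma^k f)(a)}{k!}\left(\frac{x^\rho-a^\rho}{\rho}\right)^k$, where $p = \lfloor Re(\alpha+\beta)\rceil$, I would conclude by injectivity of $\I{\alpha+\beta}{a^+}$, i.e. by applying $\D{\alpha+\beta}{a^+}$ and invoking $\D{\alpha+\beta}{a^+}\I{\alpha+\beta}{a^+} = \mathrm{id}$.

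The main obstacle is precisely this comparison of boundary data: I must show that the corrections generated by the $(\gamma^k h)(a)$, together with the discrepancy between the two Taylor polynomials, are annihilated by $\D{\alpha+\beta}{a^+}$. Evaluating $(\gamma^k h)(a)$ for $h = \I{m-\beta}{a^+}(\gamma^m f)$ is where the regularity hypothesis $f \in AC^{n+m}_\gamma[a,b]$ is consumed, since it guarantees that $\gamma^m f$ carries enough $\gamma$-derivatives at $a$ for these endpoint values to be well defined and to match the integer-part bookkeeping relating $n$, $m$ and $p$. I expect this endpoint analysis, rather than the operator algebra, to be the delicate point, and I would dispose of the integer case $\alpha = n$, $\beta = m$ first as a base case, where $\C{\cdot}{a^+}$ reduces to $\gamma^{(\cdot)}$ and the identity is immediate from $\gamma^n\gamma^m = \gamma^{n+m}$.
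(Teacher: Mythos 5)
Your handling of the two integral identities \eqref{comp_cap_I_der_left}--\eqref{comp_cap_I_der_right} is correct, and it is necessarily a different route from the paper's, because the paper gives no argument at all here: it cites Theorems 3.6 and 3.7 of \cite{Jar17}. The reduction $\I{\alpha}{a^+}\C{\alpha}{a^+}f=\I{\alpha}{a^+}\I{n-\alpha}{a^+}(\gamma^n f)=\I{n}{a^+}(\gamma^n f)$ via Proposition \ref{prop_gen_der_express} and the semigroup property, followed by induction on $n$ for the integer-order identity, is sound; your monomial rules check out, the sign bookkeeping on the right-sided version does close, and the integer case $\alpha=n$ is absorbed by the same core identity since then $\C{n}{a^+}f=\gamma^n f$. (Minor caveats: Proposition \ref{prop_semigroup} is stated for real positive orders while the statement allows complex $\alpha$, and the induction tacitly uses $\gamma^k f\in AC[a,b]$ for all $k\leq n-1$, slightly more than the literal definition of $AC^n_\gamma$.)

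The third assertion is where there is a genuine gap, and it is not one you could have closed: your plan defers exactly the step that fails. Run your own scheme in the simplest case $n=m=1$ with $1<\alpha+\beta<2$, so $p=\lfloor \alpha+\beta\rceil=2$. Since $\beta<1$, the function $h:=\C{\beta}{a^+}f$ vanishes at $a$, so your first reduction gives
\begin{align*}
\I{\alpha+\beta}{a^+}\C{\alpha}{a^+}\C{\beta}{a^+}f=\I{\beta}{a^+}\bigl(h-h(a)\bigr)=\I{\beta}{a^+}\C{\beta}{a^+}f=f-f(a),
\end{align*}
whereas
\begin{align*}
\I{\alpha+\beta}{a^+}\C{\alpha+\beta}{a^+}f=f-f(a)-(\gamma f)(a)\left(\dfrac{x^\rho-a^\rho}{\rho}\right).
\end{align*}
The discrepancy is the monomial $(\gamma f)(a)\left(\frac{x^\rho-a^\rho}{\rho}\right)$, and $\D{\alpha+\beta}{a^+}$ does \emph{not} annihilate it: its image is $(\gamma f)(a)\,\frac{\Gamma(2)}{\Gamma(2-\alpha-\beta)}\left(\frac{x^\rho-a^\rho}{\rho}\right)^{1-\alpha-\beta}$, which is nonzero whenever $(\gamma f)(a)\neq 0$. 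Concretely, take $\rho=1$ (so $\C{\alpha}{a^+}$ is exactly the classical Caputo derivative), $f(x)=x-a$, $\beta=1/2$, $\alpha=3/4$: a direct computation gives $\C{\alpha}{a^+}\C{\beta}{a^+}f(x)=(x-a)^{-1/4}/\Gamma(3/4)$, while $\C{\alpha+\beta}{a^+}f=0$ because $f''\equiv 0$. So the semigroup identity $\C{\alpha}{a^+}\C{\beta}{a^+}f=\C{\alpha+\beta}{a^+}f$ is false as stated: it holds only under extra endpoint hypotheses (here precisely $(\gamma f)(a)=0$; in general, vanishing of $(\gamma^k f)(a)$ for $k=m,\dots,n+m-1$ suffices), or when the inner order $\beta$ is an integer, in which case $\C{\alpha}{a^+}\gamma^m f=\I{n-\alpha}{a^+}(\gamma^{n+m}f)=\C{\alpha+m}{a^+}f$ does hold. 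Your instinct that the endpoint analysis is the delicate point was exactly right; what the proposal misses is that carrying out this analysis refutes the claim rather than completing it. The defect is inherited from the statement itself (and from the citation the paper leans on), but a proof attempt has to detect this rather than assume the boundary terms away.
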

\begin{proof}
See Theorem 3.6 and Theorem 3.7 in \cite{Jar17}.
\end{proof}

The limit cases when $\rho$ goes to $0$ or $1$ have also been investigated in \cite{Jar17}. The authors obtained the following results (see \cite[Theorem 3.11]{Jar17}).
\begin{thm}\label{thm_cv_rho_caputo}
Let $\alpha\in \IC$ such that $Re(\alpha)\geq 0$ and $n= \lfloor Re(\alpha) \rceil$. Then for $x>a$
\begin{enumerate}
\item $\dis{\lim_{\rho\to 1} \C{\alpha}{a^+}f(x) = \mathcal{C}^{\alpha}_{a^+}f(x)}$.
\item $\dis{\lim_{\rho\to 0^+} \C{\alpha}{a^+}f(x) = \Hd{\alpha}{a^+}f(x)}$.
\end{enumerate}
where $\mathcal{C}^{\alpha}_{a^+}$ is the Caputo modified Riemann-Liouville fractional derivative \cite{Kil06} and $\Hd{\alpha}{a^+}$ is the Caputo modified Hadamard fractional derivative \cite{Gam14} defined by 
\begin{align*}
\mathcal{C}^\alpha_{a^+}f(x):= \dfrac{1}{\Gamma(n-\alpha)}\,\int_a^x \dfrac{f^{(n)}(\tau)}{(x-\tau)^{\alpha-n+1}}\,d\tau
\end{align*}
and
\begin{align}\label{Caputo modified Hadamard}
\Hd{\alpha}{a^+}f(x):= \dfrac{1}{\Gamma(n-\alpha)}\,\int_a^x \left(\log\dfrac{x}{\tau}\right)^{n-\alpha-1} \left[\left(\tau\dfrac{d}{d\tau}\right)^nf\right](\tau)\,\dfrac{d\tau}{\tau}.
\end{align}
\end{thm}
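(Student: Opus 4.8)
\section*{Proof proposal}

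The plan is to reduce both limits to the integral representation of the generalized Caputo derivative furnished by Proposition \ref{prop_gen_der_express} and then to transport the two integral-limit statements of Theorem \ref{thm_cv_rho} through that representation. For non-integer $\alpha$ we have $\C{\alpha}{a^+}f = \I{n-\alpha}{a^+}\left(\gamma^n f\right)$ by \eqref{cap_der_left_int_reel}, so the dependence on $\rho$ enters simultaneously in two places: in the integral operator $\I{n-\alpha}{a^+}$ and in the differential operator $\gamma^n$, where $\gamma = x^{1-\rho}\frac{d}{dx}$. The integer case $\alpha = n$ is immediate from Proposition \ref{prop_gen_der_express}, since then $\C{n}{a^+}f = \gamma^n f$ and the two limits follow from $\gamma \to \frac{d}{dx}$ as $\rho\to 1$ and $\gamma \to x\frac{d}{dx}$ as $\rho\to 0^+$. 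More generally, expanding $\gamma^n$ as a finite linear combination of operators $x^{\beta_k(\rho)}\frac{d^k}{dx^k}$, $1\le k\le n$, whose coefficients and exponents $\beta_k(\rho)$ depend continuously on $\rho$, one checks that $\gamma^n f \to f^{(n)} =: g_1$ as $\rho\to 1$ and $\gamma^n f \to \left(x\frac{d}{dx}\right)^n f =: g_0$ as $\rho\to 0^+$, the convergence being uniform on the compact interval $[a,x]$ in view of the regularity of $f \in AC^n_\gamma[a,b]$.

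Next I would split, for $g$ equal to $g_1$ or $g_0$ according to the limit considered,
\[
\I{n-\alpha}{a^+}\left(\gamma^n f\right) = \I{n-\alpha}{a^+}(g) + \I{n-\alpha}{a^+}\left(\gamma^n f - g\right).
\]
The first term is governed directly by Theorem \ref{thm_cv_rho}, parts (1) and (2), applied with the order $n-\alpha$ in place of $\alpha$: its limit as $\rho\to 1$ is $\frac{1}{\Gamma(n-\alpha)}\int_a^x \frac{g_1(\tau)}{(x-\tau)^{\alpha-n+1}}\,d\tau = \mathcal{C}^{\alpha}_{a^+}f(x)$, and its limit as $\rho\to 0^+$ is $\frac{1}{\Gamma(n-\alpha)}\int_a^x \left(\log\frac{x}{\tau}\right)^{n-\alpha-1} g_0(\tau)\,\frac{d\tau}{\tau} = \Hd{\alpha}{a^+}f(x)$. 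These are exactly the two desired right-hand sides.

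The remaining work, which is the heart of the argument, is to show that the correction term $\I{n-\alpha}{a^+}\left(\gamma^n f - g\right)$ vanishes in the limit. The key observation is that the total mass of the (nonnegative) kernel is uniformly bounded: substituting $u=\tau^\rho$ gives
\[
\I{n-\alpha}{a^+}\mathbf{1}(x) = \frac{1}{\Gamma(n-\alpha+1)}\left(\frac{x^\rho-a^\rho}{\rho}\right)^{n-\alpha},
\]
which stays bounded as $\rho$ ranges over a neighbourhood of $1$ (resp. of $0^+$, where $\frac{x^\rho-a^\rho}{\rho}\to\log\frac{x}{a}$). Consequently $\left|\I{n-\alpha}{a^+}\left(\gamma^n f - g\right)(x)\right| \le \left\|\gamma^n f - g\right\|_{\infty,[a,x]}\,\I{n-\alpha}{a^+}\mathbf{1}(x)$, and the first factor tends to $0$ by the uniform convergence of the first step while the second is uniformly bounded; hence the correction term tends to $0$. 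Combining the three steps yields both limits.

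The main obstacle is precisely this interchange of limits: one cannot invoke Theorem \ref{thm_cv_rho} naively, because the integrated function $\gamma^n f$ itself depends on $\rho$. The splitting above is the cleanest remedy, isolating a fixed-function piece (covered verbatim by Theorem \ref{thm_cv_rho}) from a $\rho$-correction controlled by the uniformly bounded kernel mass together with the uniform convergence of $\gamma^n f$. An alternative, more computational route is to apply the dominated convergence theorem directly to the single explicit integral in \eqref{cap_der_left_int_reel}; there the delicate point is to exhibit a $\rho$-uniform integrable majorant near the endpoint singularity $\tau = x$ (where $x^\rho-\tau^\rho \sim \rho\, x^{\rho-1}(x-\tau)$ as $\rho\to 1$, and $x^\rho-\tau^\rho \sim \rho\log\frac{x}{\tau}$ as $\rho\to 0^+$), the exponent $\alpha-n+1 \in (0,1)$ keeping the singularity integrable in either case.
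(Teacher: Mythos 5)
The paper never actually proves this theorem: its ``proof'' is the single line ``See \cite[Theorem 3.11]{Jar17}'', deferring entirely to the reference, where the limits are obtained by passing to the limit inside the integral representation. Your proposal is therefore a genuinely independent, self-contained argument, and its core is sound: reduce to $\C{\alpha}{a^+}f = \I{n-\alpha}{a^+}\left(\gamma^n f\right)$ via Proposition \ref{prop_gen_der_express}, isolate the fixed-function piece $\I{n-\alpha}{a^+}(g)$ (to which Theorem \ref{thm_cv_rho}, parts (1) and (2), applies verbatim with order $n-\alpha$), and kill the correction $\I{n-\alpha}{a^+}\left(\gamma^n f - g\right)$ by combining the uniformly bounded kernel mass $\I{n-\alpha}{a^+}\mathbf{1}(x) = \frac{1}{\Gamma(n-\alpha+1)}\left(\frac{x^\rho-a^\rho}{\rho}\right)^{n-\alpha}$ with the uniform convergence $\gamma^n f \to g$ on $[a,x]$, which indeed holds because $a>0$ and, by Lemma \ref{lem_DL}, $\gamma^n f = \sum_{i=1}^n \lambda_{i,n}\,x^{i-n\rho}f^{(i)}$ with coefficients depending polynomially on $\rho$. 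This splitting is exactly the right remedy for the limit-interchange obstacle you correctly identify (the integrand itself depends on $\rho$), and it is more careful than the limit-inside-the-integral treatment of the cited source. Two points to tighten: first, for complex $\alpha$ the kernel is not nonnegative, so the estimate on the correction term must be run through moduli, using $\left|(x^\rho-\tau^\rho)^{n-\alpha-1}\right| = (x^\rho-\tau^\rho)^{n-Re(\alpha)-1}$ and the constant $1/|\Gamma(n-\alpha)|$, which replaces the mass bound by its real-order counterpart but changes nothing structurally; second, the hypothesis $f\in AC^n_\gamma[a,b]$ (a space which itself depends on $\rho$) does not by itself give continuity of $f^{(1)},\dots,f^{(n)}$, which your uniform-convergence step requires, so you should assume $f\in C^n[a,b]$ --- a regularity that is implicitly needed anyway for the right-hand sides $\mathcal{C}^{\alpha}_{a^+}f$ and $\Hd{\alpha}{a^+}f$ to be well defined.
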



\section{Generalized Taylor formulas}\label{sec_taylor}
We provide in this section generalized Taylor formulas involving the generalized fractional derivatives in Caputo sens. We first establish an analogous theorem to the mean value theorem.
\begin{thm}\label{thm_mean_value}
Let $0< \alpha \leq 1$ and $f\in C[a,b]$ such that $\C{\alpha}{a^+}f\in C(a,b)$ (resp. $\C{\alpha}{b^-}f\in C(a,b)$). Then for all $x\in [a,b]$ there exists $\xi\in\, ]a,x[$ (resp. $\xi\in\, ]x,b[$) such that
\begin{align}\label{taf_left}
f(x) = f(a) + \dfrac{1}{\Gamma(\alpha+1)}\left(\dfrac{x^\rho-a^\rho}{\rho}\right)^\alpha\C{\alpha}{a^+}f(\xi).
\end{align}
(resp.
\begin{align}\label{taf_right}
f(x) = f(b) + \dfrac{1}{\Gamma(\alpha+1)}\left(\dfrac{b^\rho-x^\rho}{\rho}\right)^\alpha\C{\alpha}{b^-}f(\xi)).
\end{align}
\end{thm}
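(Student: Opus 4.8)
The plan is to reduce the statement to the composition identity \eqref{comp_cap_I_der_left} followed by a weighted mean value theorem for integrals. First I would observe that, since $0 < \alpha \leq 1$, one has $n = \lfloor Re(\alpha) \rceil = 1$, so the finite sum in the left-sided composition formula of Proposition \ref{prop_comp_cap_I_der} collapses to its single $k=0$ term and gives
\begin{align*}
\I{\alpha}{a^+}\C{\alpha}{a^+}f(x) = f(x) - f(a).
\end{align*}
Writing out the left-hand side by means of the integral representation \eqref{left_int} applied to $\C{\alpha}{a^+}f$ yields
\begin{align*}
f(x) - f(a) = \dfrac{\rho^{1-\alpha}}{\Gamma(\alpha)}\int_a^x \dfrac{\tau^{\rho-1}\,\left(\C{\alpha}{a^+}f\right)(\tau)}{(x^\rho - \tau^\rho)^{1-\alpha}}\, d\tau.
\end{align*}

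Next I would apply the first (weighted) mean value theorem for integrals, treating $w(\tau) := \tau^{\rho-1}(x^\rho - \tau^\rho)^{\alpha-1}$ as the weight and $g := \C{\alpha}{a^+}f$ as the continuous factor. The key observation is that for $0 < a < \tau < x$ and $0 < \alpha \leq 1$ the weight $w$ is strictly positive and integrable on $]a,x[$: its only singularity, at $\tau = x$, is of order $1-\alpha < 1$, hence integrable, while at $\tau = a>0$ the factor $\tau^{\rho-1}$ is bounded. Since $g$ is continuous on $]a,b[$, this produces a point $\xi \in\, ]a,x[$ with
\begin{align*}
\int_a^x g(\tau)\, w(\tau)\, d\tau = g(\xi)\int_a^x w(\tau)\, d\tau,
\end{align*}
so that $f(x) - f(a) = \left(\C{\alpha}{a^+}f\right)(\xi)\cdot \I{\alpha}{a^+}(\mathds{1})(x)$, where $\mathds{1}$ denotes the constant function equal to $1$.

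It then remains to compute the constant $\I{\alpha}{a^+}(\mathds{1})(x)$. The substitution $u = \tau^\rho$ (so that $\tau^{\rho-1}\,d\tau = du/\rho$) turns the defining integral into $\rho^{-\alpha}\Gamma(\alpha)^{-1}\int_{a^\rho}^{x^\rho}(x^\rho - u)^{\alpha-1}\,du$, and a further elementary substitution together with $\Gamma(\alpha+1) = \alpha\,\Gamma(\alpha)$ yields exactly
\begin{align*}
\I{\alpha}{a^+}(\mathds{1})(x) = \dfrac{1}{\Gamma(\alpha+1)}\left(\dfrac{x^\rho - a^\rho}{\rho}\right)^\alpha,
\end{align*}
which establishes \eqref{taf_left}. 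The right-sided formula \eqref{taf_right} follows by the same scheme, starting from \eqref{comp_cap_I_der_right} and the right-sided kernel \eqref{right_int}.

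The step I expect to demand the most care is the application of the mean value theorem. One must justify that the singular weight $w$ keeps a constant sign and is integrable, and --- because $\C{\alpha}{a^+}f$ is only assumed continuous on the \emph{open} interval $]a,b[$ rather than on $[a,b]$ --- that the intermediate value argument still delivers an \emph{interior} point $\xi$. I would handle this by bounding the quotient $\int_a^x g\,w \big/ \int_a^x w$ between $\inf_{]a,x[} g$ and $\sup_{]a,x[} g$, and then invoking the intermediate value theorem for the continuous function $g$ on $]a,x[$; the finiteness of $f(x)-f(a)$ guarantees the integral $\int_a^x g\,w$ converges, so these bounds are meaningful and the quotient is attained at some $\xi\in\,]a,x[$.
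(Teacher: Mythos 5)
Your proof is correct and follows essentially the same route as the paper's: you reduce the statement to the composition identity \eqref{comp_cap_I_der_left} (which collapses to $f(x)-f(a)$ since $n=1$), then apply the weighted integral mean value theorem to the representation \eqref{left_int} and evaluate $\I{\alpha}{a^+}(1)(x)$ explicitly. Your extra care in the mean value step --- checking positivity and integrability of the weight $\tau^{\rho-1}(x^\rho-\tau^\rho)^{\alpha-1}$ and extracting an \emph{interior} point $\xi$ when $\C{\alpha}{a^+}f$ is only assumed continuous on the open interval --- is a detail the paper's proof glosses over, but it does not change the underlying argument.
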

\begin{proof}
Since $0< \alpha \leq 1$ we have by Proposition \ref{prop_comp_cap_I_der} that
\begin{align}\label{I=f(x)-f(a)}
\I{\alpha}{a^+}\C{\alpha}{a^+}f(x) = f(x) - f(a).
\end{align}
Now, using equation \eqref{left_int} and the integral mean value theorem, we obtain
\begin{align}\label{I=Dxi}
\I{\alpha}{a^+}\C{\alpha}{a^+}f(x) & = \dfrac{\rho^{1-\alpha}}{\Gamma(\alpha)}\int_a^x \dfrac{\tau^{\rho-1}\,\C{\alpha}{a^+}f(\tau)}{(x^\rho-\tau^\rho)^{1-\alpha}}\ d\tau \nonumber\\
& = \dfrac{\rho^{1-\alpha}}{\Gamma(\alpha)}\,\C{\alpha}{a^+}f(\xi)\,\int_a^x \dfrac{\tau^{\rho-1}}{(x^\rho-\tau^\rho)^{1-\alpha}}\ d\tau \nonumber\\
& = \dfrac{1}{\Gamma(\alpha+1)}\left(\dfrac{x^\rho-a^\rho}{\rho}\right)^\alpha\C{\alpha}{a^+}f(\xi). 
\end{align}
with $\xi \in\, ]a,x[$. Finally, the result follows by equating \eqref{I=f(x)-f(a)} and \eqref{I=Dxi}. Equation \eqref{taf_right} can be obtained by a same reasoning.
\end{proof}
%
%
Let us notice that in case $\alpha=\rho=1$, the classical mean value theorem is recovered. Now, we extend this result to arbitrary order $m\in \IN$. We need the following result.
\begin{prop}\label{prop_diff_I}
Let $0< \alpha \leq 1$. Let $j\in \IN$ and suppose $\C{j\alpha}{a^+}f$ and $\C{(j+1)\alpha}{a^+}f$ belong to $C[a,b]$ (resp. $\C{j\alpha}{b^-}f$ and $\C{(j+1)\alpha}{b^-}f$ belong to $C[a,b]$), then
\begin{align*}
\I{j\alpha}{a^+}\C{j\alpha}{a^+}f(x) - \I{(j+1)\alpha}{a^+}\C{(j+1)\alpha}{a^+}f(x) = \dfrac{1}{\Gamma(j\alpha+1)}\left(\dfrac{x^\rho-a^\rho}{\rho}\right)^{j\alpha}\C{j\alpha}{a^+}f(a)
\end{align*}
(resp.
\begin{align*}
\I{j\alpha}{b^-}\C{j\alpha}{b^-}f(x) - \I{(j+1)\alpha}{b^-}\C{(j+1)\alpha}{b^-}f(x) = \dfrac{1}{\Gamma(j\alpha+1)}\left(\dfrac{b^\rho-x^\rho}{\rho}\right)^{j\alpha}\C{j\alpha}{b^-}f(b)),
\end{align*}
where $\C{j\alpha}{a^+}f := \C{\alpha}{a^+}\circ\C{\alpha}{a^+}\circ\dots\circ\C{\alpha}{a^+}f\ $ ($j$ times).
\end{prop}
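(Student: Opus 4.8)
The plan is to collapse the left-hand side to a single explicit term by peeling off one factor of $\C{\alpha}{a^+}$ and then exploiting the semigroup property of the generalized integrals. I would carry out only the left-sided identity in detail, the right-sided one following by the symmetric argument (replacing $\I{}{a^+},\C{}{a^+}$ by $\I{}{b^-},\C{}{b^-}$ and tracking the signs already present in \eqref{comp_cap_I_der_right}).

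First I would set $g := \C{j\alpha}{a^+}f$, so that by the very definition of the iterated operator one has $\C{(j+1)\alpha}{a^+}f = \C{\alpha}{a^+}g$. Using the semigroup property $\I{j\alpha}{a^+}\,\I{\alpha}{a^+} = \I{(j+1)\alpha}{a^+}$ from Proposition \ref{prop_semigroup}, I would write
\begin{align*}
\I{(j+1)\alpha}{a^+}\C{(j+1)\alpha}{a^+}f = \I{j\alpha}{a^+}\left(\I{\alpha}{a^+}\C{\alpha}{a^+}g\right).
\end{align*}
Since $0<\alpha\leq 1$ forces $n=\lfloor Re(\alpha)\rceil = 1$, the composition identity \eqref{comp_cap_I_der_left} applied to $g$ retains only the $k=0$ term of the sum, giving $\I{\alpha}{a^+}\C{\alpha}{a^+}g(x) = g(x)-g(a)$. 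Substituting this back and using the linearity of $\I{j\alpha}{a^+}$, the term $\I{j\alpha}{a^+}g = \I{j\alpha}{a^+}\C{j\alpha}{a^+}f$ cancels against the first summand of the claimed difference, leaving
\begin{align*}
\I{j\alpha}{a^+}\C{j\alpha}{a^+}f(x) - \I{(j+1)\alpha}{a^+}\C{(j+1)\alpha}{a^+}f(x) = g(a)\,\I{j\alpha}{a^+}\mathds{1}(x),
\end{align*}
where $\mathds{1}$ denotes the constant function equal to $1$ and $g(a)=\C{j\alpha}{a^+}f(a)$.

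It then remains to evaluate $\I{j\alpha}{a^+}\mathds{1}(x)$ explicitly. Starting from the definition \eqref{left_int}, the substitution $u=\tau^\rho$ followed by $v=x^\rho-u$ turns the integral into an elementary one and yields $\dis{\I{j\alpha}{a^+}\mathds{1}(x) = \frac{1}{\Gamma(j\alpha+1)}\left(\frac{x^\rho-a^\rho}{\rho}\right)^{j\alpha}}$, after using $j\alpha\,\Gamma(j\alpha)=\Gamma(j\alpha+1)$. Inserting this into the previous display produces the announced formula.

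I expect no serious difficulty in the algebra; the only point requiring care is the legitimacy of applying \eqref{comp_cap_I_der_left} to $g=\C{j\alpha}{a^+}f$, which presupposes that $g$ is regular enough for $\C{\alpha}{a^+}g$ to be defined and for the boundary value $g(a)$ to make sense. This is exactly what the standing hypotheses secure: both $g=\C{j\alpha}{a^+}f$ and $\C{\alpha}{a^+}g = \C{(j+1)\alpha}{a^+}f$ are assumed continuous on $[a,b]$, so the integral defining $\I{\alpha}{a^+}\C{\alpha}{a^+}g$ converges and $g(a)$ is well defined.
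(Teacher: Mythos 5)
Your proposal is correct and follows essentially the same route as the paper's own proof: both peel off one factor via $\C{(j+1)\alpha}{a^+}f = \C{\alpha}{a^+}\C{j\alpha}{a^+}f$, use the semigroup property $\I{(j+1)\alpha}{a^+} = \I{j\alpha}{a^+}\I{\alpha}{a^+}$, apply \eqref{comp_cap_I_der_left} with $n=1$ to reduce the integrand to the constant $\C{j\alpha}{a^+}f(a)$, and finish by evaluating $\I{j\alpha}{a^+}$ of the constant function. Your explicit attention to the regularity needed to invoke \eqref{comp_cap_I_der_left} on $g=\C{j\alpha}{a^+}f$ is a welcome (if minor) addition the paper leaves implicit.
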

\begin{proof}
We only prove the first identity, since the second one can be obtained in the same manner. Using the semigroup properties of the generalized fractional integrals and derivatives given in Proposition \ref{prop_semigroup} and Proposition \ref{prop_comp_cap_I_der}, we get
\begin{align*}
\I{j\alpha}{a^+}\C{j\alpha}{a^+}f(x) - \I{(j+1)\alpha}{a^+}\C{(j+1)\alpha}{a^+}f(x) & = \I{j\alpha}{a^+}\left(\C{j\alpha}{a^+}f - \I{\alpha}{a^+}\C{(j+1)\alpha}{a^+}f\right)(x) \\
& = \I{j\alpha}{a^+}\left(\C{j\alpha}{a^+}f - \left(\I{\alpha}{a^+}\C{\alpha}{a^+}\right)\C{j\alpha}{a^+}f\right)(x) \\
& = \I{j\alpha}{a^+}\left(\C{j\alpha}{a^+}f(a)\right)(x)
\end{align*}
where the last equality holds true in virtue of equation \eqref{comp_cap_I_der_left}. Finally, using the definition of the generalized fractional integral, we obtain
\begin{align*}
\I{j\alpha}{a^+}\C{j\alpha}{a^+}f(x) - \I{(j+1)\alpha}{a^+}\C{(j+1)\alpha}{a^+}f(x) & = \left(\I{j\alpha}{a^+}\left(1\right)(x)\right)\,\C{j\alpha}{a^+}f(a) \\
& = \dfrac{1}{\Gamma(j\alpha+1)}\left(\dfrac{x^\rho-a^\rho}{\rho}\right)^{j\alpha}\C{j\alpha}{a^+}f(a).
\end{align*}
\end{proof}
We are now able to state the generalized Taylor formula by means of generalized Caputo-type fractional derivatives.
\begin{thm}\label{thm_DL}
Let $0\leq a < b < +\infty$. Let $0< \alpha \leq 1$ and let $m$ be an arbitrary non-negative integer. Suppose $\C{j\alpha}{a^+}f\in C[a,b]$ (resp. $\C{j\alpha}{b^-}f\in C[a,b]$) for $j=0,\, 1, \dots,m+1$, then the generalized Taylor-Lagrange formula involving the generalized Caputo-type fractional derivatives writes
\begin{align*}
f(x) = \sum_{j=0}^m \left(\dfrac{x^\rho-a^\rho}{\rho}\right)^{j\alpha} \dfrac{\C{j\alpha}{a^+}f(a)}{\Gamma(j\alpha+1)}+ \left(\dfrac{x^\rho-a^\rho}{\rho}\right)^{(m+1)\alpha}\dfrac{\C{(m+1)\alpha}{a^+}f(\xi)}{\Gamma((m+1)\alpha+1)}
\end{align*}
(resp.
\begin{align*}
f(x) = \sum_{j=0}^m \left(\dfrac{b^\rho-x^\rho}{\rho}\right)^{j\alpha} \dfrac{\C{j\alpha}{b^-}f(b)}{\Gamma(j\alpha+1)}+ \left(\dfrac{b^\rho-x^\rho}{\rho}\right)^{(m+1)\alpha}\dfrac{\C{(m+1)\alpha}{b^-}f(\xi)}{\Gamma((m+1)\alpha+1)})
\end{align*}
where $\xi \in\, ]a,x[$ (resp. $\xi \in\, ]x,b[$) and $\C{j\alpha}{a^+}f := \C{\alpha}{a^+}\circ\dots\circ\C{\alpha}{a^+}f\ $ ($j$ times).
\end{thm}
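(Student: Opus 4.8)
The plan is to exploit a telescoping sum in which the quantity $\I{j\alpha}{a^+}\C{j\alpha}{a^+}f(x)$ plays the role of a partial remainder at stage $j$. First I would observe that, since both $\I{0}{a^+}$ and $\C{0}{a^+}$ reduce to the identity operator (Proposition~\ref{prop_gen_der_express} with $\alpha=n=0$), one has $f(x) = \I{0}{a^+}\C{0}{a^+}f(x)$. This furnishes the starting point $j=0$ of the telescope.

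Next I would write the purely algebraic identity
\begin{align*}
\I{0}{a^+}\C{0}{a^+}f(x) = \sum_{j=0}^{m}\Big(\I{j\alpha}{a^+}\C{j\alpha}{a^+}f(x) - \I{(j+1)\alpha}{a^+}\C{(j+1)\alpha}{a^+}f(x)\Big) + \I{(m+1)\alpha}{a^+}\C{(m+1)\alpha}{a^+}f(x),
\end{align*}
obtained by telescoping over the indices $0,\alpha,2\alpha,\dots,(m+1)\alpha$; the continuity hypotheses on $\C{j\alpha}{a^+}f$ for $j=0,\dots,m+1$ ensure that every term is well defined. Applying Proposition~\ref{prop_diff_I} to each bracketed summand immediately collapses the sum into
\begin{align*}
\sum_{j=0}^{m}\dfrac{1}{\Gamma(j\alpha+1)}\left(\dfrac{x^\rho-a^\rho}{\rho}\right)^{j\alpha}\C{j\alpha}{a^+}f(a),
\end{align*}
which is precisely the finite-sum part of the asserted expansion.

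It then remains to estimate the tail $\I{(m+1)\alpha}{a^+}\C{(m+1)\alpha}{a^+}f(x)$, and here I would mimic the argument already used for Theorem~\ref{thm_mean_value}. Writing this term through the integral representation \eqref{left_int} with order $\beta=(m+1)\alpha$, the weight $\tau^{\rho-1}(x^\rho-\tau^\rho)^{(m+1)\alpha-1}$ keeps a constant positive sign on $]a,x[$ and is integrable there since $(m+1)\alpha>0$; the integral mean value theorem therefore extracts a point $\xi\in\,]a,x[$ with
\begin{align*}
\I{(m+1)\alpha}{a^+}\C{(m+1)\alpha}{a^+}f(x) = \C{(m+1)\alpha}{a^+}f(\xi)\,\I{(m+1)\alpha}{a^+}(1)(x).
\end{align*}
Evaluating the elementary integral $\I{(m+1)\alpha}{a^+}(1)(x) = \dfrac{1}{\Gamma((m+1)\alpha+1)}\left(\dfrac{x^\rho-a^\rho}{\rho}\right)^{(m+1)\alpha}$, exactly as in \eqref{I=Dxi}, yields the Lagrange remainder and completes the proof. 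The right-sided formula follows by the identical scheme with the right-sided operators.

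The telescoping identity is the heart of the argument, and the only delicate point I anticipate is justifying the integral mean value theorem for the remainder when $(m+1)\alpha$ may exceed $1$: unlike the case $\alpha\leq 1$ in Theorem~\ref{thm_mean_value}, the exponent $1-(m+1)\alpha$ can now be negative, so I would stress that integrability near $\tau=x$ still holds because $(m+1)\alpha-1>-1$, while the positivity of the kernel is unaffected.
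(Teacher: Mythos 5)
Your proposal is correct and follows essentially the same route as the paper: telescoping the identity of Proposition \ref{prop_diff_I} over $j=0,\dots,m$ to produce the finite sum, then applying the integral mean value theorem to the remainder $\I{(m+1)\alpha}{a^+}\C{(m+1)\alpha}{a^+}f(x)$ exactly as in \eqref{I=Dxi}. Your closing observation that the mean value argument survives when $(m+1)\alpha>1$ (kernel of constant sign, integrable since $(m+1)\alpha-1>-1$) is a welcome justification of a step the paper passes over silently when it says ``replacing $\alpha$ by $(m+1)\alpha$'' in a formula originally derived under the assumption $\alpha\leq 1$.
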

\begin{proof}
Using Proposition \ref{prop_diff_I} we have
\begin{align*}
\sum_{j=0}^m \I{j\alpha}{a^+}\C{j\alpha}{a^+}f(x) - \I{(j+1)\alpha}{a^+}\C{(j+1)\alpha}{a^+}f(x) & = \sum_{j=0}^m \dfrac{1}{\Gamma(j\alpha+1)}\left(\dfrac{x^\rho-a^\rho}{\rho}\right)^{j\alpha}\C{j\alpha}{a^+}f(a).
\end{align*}
It follows
\begin{align*}
\I{0}{a^+}\C{0}{a^+}f(x) - \I{(m+1)\alpha}{a^+}\C{(m+1)\alpha}{a^+}f(x) & = \sum_{j=0}^m \dfrac{1}{\Gamma(j\alpha+1)}\left(\dfrac{x^\rho-a^\rho}{\rho}\right)^{j\alpha}\C{j\alpha}{a^+}f(a),
\end{align*}
that is
\begin{align}\label{f(x)=sum+reste}
f(x) = \sum_{j=0}^m \dfrac{1}{\Gamma(j\alpha+1)}\left(\dfrac{x^\rho-a^\rho}{\rho}\right)^{j\alpha}\C{j\alpha}{a^+}f(a) + \I{(m+1)\alpha}{a^+}\C{(m+1)\alpha}{a^+}f(x).
\end{align}
Now we make use of the same arguments as in the proof of Theorem \ref{thm_mean_value}. In fact, replacing $\alpha$ by $(m+1)\alpha$ in equation \eqref{I=Dxi}, we get
\begin{align*}
\I{(m+1)\alpha}{a^+}\C{(m+1)\alpha}{a^+}f(x) = \dfrac{1}{\Gamma((m+1)\alpha+1)}\left(\dfrac{x^\rho-a^\rho}{\rho}\right)^{(m+1)\alpha}\C{(m+1)\alpha}{a^+}f(\xi)
\end{align*}
with $\xi\in\, ]a,x[$. The proof of the second identity involving the right-sided generalized fractional derivatives is similar. This ends the proof.
\end{proof}
\begin{cor}\label{cor_DL_reste_int}
Under the same assumptions of Theorem \ref{thm_DL}, the generalized Taylor expansion of $f$ could also be written with remainder in integral form. We have
\begin{align*}
f(x) & = \sum_{j=0}^m \left(\dfrac{x^\rho-a^\rho}{\rho}\right)^{j\alpha} \dfrac{\C{j\alpha}{a^+}f(a)}{\Gamma(j\alpha+1)} + \dfrac{\rho^{1-(m+1)\alpha}}{\Gamma((m+1)\alpha)}\int_a^x \dfrac{\tau^{\rho-1}\,\C{(m+1)\alpha}{a^+}f(\tau)}{(x^\rho-\tau^\rho)^{1-(m+1)\alpha}}\ d\tau,
\end{align*}
and similarly
\begin{align*}
f(x) & = \sum_{j=0}^m \left(\dfrac{b^\rho-x^\rho}{\rho}\right)^{j\alpha} \dfrac{\C{j\alpha}{b^-}f(b)}{\Gamma(j\alpha+1)} + \dfrac{\rho^{1-(m+1)\alpha}}{\Gamma((m+1)\alpha)}\int_x^b \dfrac{\tau^{\rho-1}\,\C{(m+1)\alpha}{b^-}f(\tau)}{(\tau^\rho-x^\rho)^{1-(m+1)\alpha}}\ d\tau.
\end{align*}
\end{cor}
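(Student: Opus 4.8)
The plan is to observe that the integral-form remainder is already implicit in the proof of Theorem \ref{thm_DL}: equation \eqref{f(x)=sum+reste} expresses
\[
f(x) = \sum_{j=0}^m \dfrac{1}{\Gamma(j\alpha+1)}\left(\dfrac{x^\rho-a^\rho}{\rho}\right)^{j\alpha}\C{j\alpha}{a^+}f(a) + \I{(m+1)\alpha}{a^+}\C{(m+1)\alpha}{a^+}f(x),
\]
where the last term is still written in operator form. Whereas the proof of Theorem \ref{thm_DL} continues by applying the integral mean value theorem to this remainder in order to reach the Lagrange form, here I would simply stop at this stage and unfold the generalized fractional integral according to its definition.

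Concretely, the only step needed is to apply formula \eqref{left_int} with $\alpha$ replaced by $(m+1)\alpha$ and with integrand $\C{(m+1)\alpha}{a^+}f$, which gives
\[
\I{(m+1)\alpha}{a^+}\C{(m+1)\alpha}{a^+}f(x) = \dfrac{\rho^{1-(m+1)\alpha}}{\Gamma((m+1)\alpha)}\int_a^x \dfrac{\tau^{\rho-1}\,\C{(m+1)\alpha}{a^+}f(\tau)}{(x^\rho-\tau^\rho)^{1-(m+1)\alpha}}\ d\tau.
\]
Substituting this back into the display above yields the first identity. The second, right-sided identity then follows by the same argument, starting from the analogous right-sided version of \eqref{f(x)=sum+reste} established in the proof of Theorem \ref{thm_DL} and invoking the right-sided integral definition \eqref{right_int} in place of \eqref{left_int}.

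I do not expect a genuine obstacle here: the statement is a reformulation of the remainder rather than a new result, and the hypotheses of Theorem \ref{thm_DL} are inherited verbatim. The one point worth recording is that the integral on the right is well defined. Since $\C{(m+1)\alpha}{a^+}f$ is continuous on $[a,b]$ by assumption and the singular kernel $(x^\rho-\tau^\rho)^{-(1-(m+1)\alpha)}$ is integrable near $\tau=x$ as soon as $(m+1)\alpha>0$, the defining integral converges and the substitution above is legitimate.
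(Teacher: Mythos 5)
Your proposal is correct and follows exactly the paper's own argument: the paper also proves this corollary as a direct consequence of equation \eqref{f(x)=sum+reste} together with the definition \eqref{left_int} of the generalized fractional integral (and \eqref{right_int} for the right-sided case). Your additional remark on the integrability of the singular kernel is a harmless, valid elaboration of the same reasoning.
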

\begin{proof}
Direct consequence of equations \eqref{f(x)=sum+reste} and \eqref{left_int}.
\end{proof}
\begin{rmrk}
If $0< \alpha \leq 1$ then Proposition \ref{prop_comp_cap_I_der} becomes a particular case of Theorem \ref{thm_DL} (or Corollary \ref{cor_DL_reste_int}) corresponding to the zeroth order Taylor expansion of $f$ (i.e $m=0$). Indeed, the result stated in Proposition \ref{prop_comp_cap_I_der} does not provide further information on the Taylor expansion of $f$ since the summation in the right hand side of equation \eqref{comp_cap_I_der_left} depends on $\alpha$, and for $0< \alpha \leq 1$ it simply reduces to $f(a)$. Notice that the classical Taylor expansion formula can also be recovered when taking $\alpha=\rho=1$ in Theorem \ref{thm_DL} or Corollary \ref{cor_DL_reste_int}.
\end{rmrk}
\begin{rmrk}\label{rem_Cjalpha}
The direct evaluation of the fractional derivatives $\C{j\alpha}{a^+}f(a)$, $j=0,\dots,m$ that appear in Theorem \ref{thm_DL} (or Corollary \ref{cor_DL_reste_int}) can be a difficult task. Alternatively, one can easily evaluate them using the identities given in Proposition \ref{prop_gen_der_express}. Indeed, applying the integral mean value theorem to equation \eqref{cap_der_left_int_reel}, one can obtain after simplifications
\begin{align*}
\C{j\alpha}{a^+}f(a) = 
\left\{
\begin{array}{ll}
\dfrac{1}{\Gamma(n-j\alpha+1)}\dis{\lim_{x\to a^+}} \left(\dfrac{x^\rho-a^\rho}{\rho}\right)^{n-j\alpha}\left(\gamma^n f\right)(x) & \text{ if }\ j\alpha\not\in \IN_0,\\ & \\
\left(\gamma^n f\right)(a) & \text{ if }\ j\alpha\in \IN_0
\end{array}
\right.
\end{align*}
with $n=\lfloor j\alpha \rceil$. Analogous formula can be obtained for $\C{j\alpha}{b^-}f(b)$. In \ref{Appendix_gamma} we give an algorithm to recursively evaluate the term $\left(\gamma^n f\right)(x)$.
\end{rmrk}
In the limit case when $\rho$ goes to zero, we obtain the following result.
\begin{cor}
Let $0< a < b < +\infty$. Let $0< \alpha \leq 1$ and let $m$ be an arbitrary non-negative integer. Suppose $\Hd{j\alpha}{a^+}f\in C[a,b]$ for $j=0,\, 1, \dots,m+1$, then for $x>a$
\begin{align*}
f(x) = \sum_{j=0}^m \left(\log\dfrac{x}{a}\right)^{j\alpha} \dfrac{\Hd{j\alpha}{a^+}f(a)}{\Gamma(j\alpha+1)}+ \left(\log\dfrac{x}{a}\right)^{(m+1)\alpha}\dfrac{\Hd{(m+1)\alpha}{a^+}f(\xi)}{\Gamma((m+1)\alpha+1)}
\end{align*}
where $\xi\in\, ]a,x[$ and $\Hd{\alpha}{a^+}$ is Caputo modified Hadamard fractional derivative defined by \eqref{Caputo modified Hadamard}.
\end{cor}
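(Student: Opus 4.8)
The plan is to derive the statement by letting $\rho\to 0^+$ in the expansions of Theorem \ref{thm_DL} and Corollary \ref{cor_DL_reste_int}. I would deliberately start from the integral-remainder form of Corollary \ref{cor_DL_reste_int} rather than the mean-value form of Theorem \ref{thm_DL}: the intermediate point $\xi$ produced by the mean value theorem depends on $\rho$, and there is no reason for $\xi_\rho$ to converge, so passing to the limit directly in the mean-value form is awkward. Working with the integral form sidesteps this, and the mean-value remainder is recovered only at the very end by one application of the integral mean value theorem to the limiting (Hadamard) remainder. Three limits must be controlled as $\rho\to 0^+$: the prefactors $\left(\frac{x^\rho-a^\rho}{\rho}\right)^{j\alpha}$, the coefficients $\C{j\alpha}{a^+}f(a)$, and the remainder integral.

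The first two are routine. Writing $x^\rho=e^{\rho\log x}$ and expanding to first order gives $\frac{x^\rho-a^\rho}{\rho}\to\log\frac{x}{a}$, hence $\left(\frac{x^\rho-a^\rho}{\rho}\right)^{j\alpha}\to\left(\log\frac{x}{a}\right)^{j\alpha}$. For the coefficients I would first observe that, by the composition property of Proposition \ref{prop_comp_cap_I_der}, the $j$-fold composition $\C{j\alpha}{a^+}f$ is the generalized Caputo derivative of order $j\alpha$, so Theorem \ref{thm_cv_rho_caputo} applies with that order and yields $\lim_{\rho\to 0^+}\C{j\alpha}{a^+}f(a)=\Hd{j\alpha}{a^+}f(a)$; the hypothesis $\Hd{j\alpha}{a^+}f\in C[a,b]$ makes the evaluation at $a$ legitimate.

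The hard part is the remainder integral
\[
\frac{\rho^{1-(m+1)\alpha}}{\Gamma((m+1)\alpha)}\int_a^x \frac{\tau^{\rho-1}\,\C{(m+1)\alpha}{a^+}f(\tau)}{(x^\rho-\tau^\rho)^{1-(m+1)\alpha}}\,d\tau=\I{(m+1)\alpha}{a^+}\big(\C{(m+1)\alpha}{a^+}f\big)(x),
\]
where both the kernel and the integrand depend on $\rho$, so Theorem \ref{thm_cv_rho}(2), a statement about a fixed integrand, does not apply verbatim. I would use the splitting
\[
\I{(m+1)\alpha}{a^+}\C{(m+1)\alpha}{a^+}f=\I{(m+1)\alpha}{a^+}\Hd{(m+1)\alpha}{a^+}f+\I{(m+1)\alpha}{a^+}\big(\C{(m+1)\alpha}{a^+}f-\Hd{(m+1)\alpha}{a^+}f\big):
\]
the first summand converges to the Hadamard fractional integral of $\Hd{(m+1)\alpha}{a^+}f$ by the kernel limit in Theorem \ref{thm_cv_rho}(2), and the second must be shown to vanish. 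Since Theorem \ref{thm_cv_rho_caputo} only gives pointwise convergence $\C{(m+1)\alpha}{a^+}f\to\Hd{(m+1)\alpha}{a^+}f$, killing the second summand requires a uniform bound on the operators $\I{(m+1)\alpha}{a^+}$ on a right-neighbourhood of $\rho=0$ together with an upgrade of that convergence (uniform on $[a,x]$, or dominated) strong enough to pass under the integral. Supplying this uniform control is the genuine obstacle; the rest is bookkeeping.

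Once the remainder is identified with $\frac{1}{\Gamma((m+1)\alpha)}\int_a^x \left(\log\frac{x}{\tau}\right)^{(m+1)\alpha-1}\Hd{(m+1)\alpha}{a^+}f(\tau)\,\frac{d\tau}{\tau}$, I would finish exactly as in the proof of Theorem \ref{thm_mean_value}: because $\Hd{(m+1)\alpha}{a^+}f$ is continuous on $[a,x]$ and the kernel $\left(\log\frac{x}{\tau}\right)^{(m+1)\alpha-1}\tau^{-1}$ has constant sign, the integral mean value theorem gives some $\xi\in\,]a,x[$ for which that integral equals $\Hd{(m+1)\alpha}{a^+}f(\xi)$ times $\frac{1}{\Gamma((m+1)\alpha)}\int_a^x \left(\log\frac{x}{\tau}\right)^{(m+1)\alpha-1}\frac{d\tau}{\tau}$. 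The substitution $u=\log\frac{x}{\tau}$ evaluates this last integral to $\frac{(\log\frac{x}{a})^{(m+1)\alpha}}{\Gamma((m+1)\alpha+1)}$, which assembles into the asserted remainder and completes the argument.
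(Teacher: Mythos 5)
Your proposal is not a complete proof: the step you yourself isolate as ``the genuine obstacle'' --- showing that $\I{(m+1)\alpha}{a^+}\bigl(\C{(m+1)\alpha}{a^+}f-\Hd{(m+1)\alpha}{a^+}f\bigr)(x)\to 0$ as $\rho\to 0^+$ --- is named but never carried out, and it is genuinely nontrivial: Theorem \ref{thm_cv_rho_caputo} gives only pointwise convergence, and only for $x>a$, while the corollary's hypotheses (continuity of the $\Hd{j\alpha}{a^+}f$) do not obviously yield the uniform-in-$\rho$ bound or uniform convergence you would need to pass under the integral. Two further instances of the same interchange problem slip past you. First, the coefficients are evaluated at $x=a$, whereas Theorem \ref{thm_cv_rho_caputo} is stated for $x>a$; continuity of the limit function $\Hd{j\alpha}{a^+}f$ at $a$ does not by itself legitimize $\lim_{\rho\to 0^+}\C{j\alpha}{a^+}f(a)=\Hd{j\alpha}{a^+}f(a)$. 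Second, the corollary's hypotheses concern only the Hadamard derivatives, so even invoking Theorem \ref{thm_DL} or Corollary \ref{cor_DL_reste_int} for each fixed $\rho>0$ requires continuity assumptions on $\C{j\alpha}{a^+}f$ that are nowhere stated. As written, the proposal reduces the corollary to several unproven limit-interchange claims.

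For comparison, the paper's own proof is a single sentence: it passes to the limit $\rho\to 0^+$ in the mean-value form of Theorem \ref{thm_DL}, citing the L'Hospital rule for $(x^\rho-a^\rho)/\rho\to\log(x/a)$ and Theorem \ref{thm_cv_rho_caputo} for the derivatives --- precisely the route you rejected because the intermediate point $\xi_\rho$ need not converge. Your critique of that route is fair (the paper glosses over both the $\xi_\rho$ issue and the uniformity issue), but your repair stops exactly where the paper's does. Two observations would let you finish. One: no uniformity is needed for the \emph{convergence} of the remainder, since by \eqref{f(x)=sum+reste} it equals $f(x)$ minus the partial sum and therefore converges as soon as the coefficients do; uniformity enters only when identifying its limit. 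Two, and decisively: the limit procedure can be bypassed altogether. The Caputo-modified Hadamard operators satisfy the same identities as their generalized counterparts (semigroup property of the Hadamard integrals and the analogue of \eqref{comp_cap_I_der_left}; see \cite{Gam14,Kil06}), so the proofs of Proposition \ref{prop_diff_I}, of \eqref{f(x)=sum+reste}, and of Theorem \ref{thm_DL} carry over verbatim with $(x^\rho-a^\rho)/\rho$ replaced by $\log(x/a)$ and $\I{\alpha}{a^+}$, $\C{\alpha}{a^+}$ replaced by the Hadamard integral and derivative; your closing integral mean value computation then yields the asserted remainder. That direct argument uses exactly the stated hypotheses and involves no interchange of limits, and it is what either your proof or the paper's ultimately needs to fall back on.
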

\begin{proof}
The proof follows from the L'Hospital rule and Theorem \ref{thm_cv_rho_caputo}. 
\end{proof}
%

\section{Approximation of functions with M\"untz Polynomials}\label{sec_muntz}
We utilize of the previous results in order to derive some approximation formulas of functions at a given point.
\begin{prop}\label{prop_approx_function}
Let $f$ be a given function such that $\C{j\alpha}{a^+}f\in C[a,b]$ for all $j=0,1\dots m+1$, where $0< \alpha \leq 1$ and $m\in \IN_0$. Then for all $x\in [a,b]$ we have
\begin{align}\label{Muntz-expansion}
f(x) \simeq \Lambda_m(x) = \sum_{j=0}^m \left(\dfrac{x^\rho-a^\rho}{\rho}\right)^{j\alpha} \dfrac{\C{j\alpha}{a^+}f(a)}{\Gamma(j\alpha+1)}.
\end{align}
Moreover, the interpolation error can be expressed as
\begin{align}\label{Muntz-error}
e_m(x) : = f(x) - \Lambda_m(x) = \left(\dfrac{x^\rho-a^\rho}{\rho}\right)^{(m+1)\alpha}\dfrac{\C{(m+1)\alpha}{a^+}f(\xi)}{\Gamma((m+1)\alpha+1)}
\end{align}
with $\xi \in\, ]a,x[$.
\end{prop}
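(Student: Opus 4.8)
The plan is to recognize that Proposition \ref{prop_approx_function} is essentially a restatement of Theorem \ref{thm_DL}, repackaged in the language of approximation. The approximant $\Lambda_m(x)$ is precisely the finite sum appearing in the generalized Taylor--Lagrange formula, and the interpolation error $e_m(x) = f(x) - \Lambda_m(x)$ is exactly the remainder term. So the entire content follows by invoking Theorem \ref{thm_DL} and rearranging. First I would note that the hypotheses match verbatim: assuming $\C{j\alpha}{a^+}f \in C[a,b]$ for $j = 0, 1, \dots, m+1$ with $0 < \alpha \leq 1$ and $m \in \IN_0$ is exactly what Theorem \ref{thm_DL} requires.

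Concretely, I would begin by applying Theorem \ref{thm_DL} to $f$ on $[a,b]$, which yields the identity
\begin{align*}
f(x) = \sum_{j=0}^m \left(\dfrac{x^\rho-a^\rho}{\rho}\right)^{j\alpha} \dfrac{\C{j\alpha}{a^+}f(a)}{\Gamma(j\alpha+1)} + \left(\dfrac{x^\rho-a^\rho}{\rho}\right)^{(m+1)\alpha}\dfrac{\C{(m+1)\alpha}{a^+}f(\xi)}{\Gamma((m+1)\alpha+1)}
\end{align*}
for some $\xi \in\, ]a,x[$. I would then identify the first sum with $\Lambda_m(x)$ as defined in \eqref{Muntz-expansion}, so that the displayed formula reads $f(x) = \Lambda_m(x) + R$, where $R$ is the single remainder term. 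Subtracting $\Lambda_m(x)$ from both sides gives $e_m(x) = f(x) - \Lambda_m(x) = R$, which is precisely the expression \eqref{Muntz-error}. The approximation $f(x) \simeq \Lambda_m(x)$ is then justified by observing that the remainder $e_m(x)$ is the neglected term.

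Since this is a direct corollary, there is no genuine obstacle. The only point warranting care is the interpretation of the symbol $\simeq$ in \eqref{Muntz-expansion}: it is not an asymptotic statement to be proved but merely notation indicating that $\Lambda_m$ approximates $f$ with error $e_m$. Thus the proof reduces to citing Theorem \ref{thm_DL} and reading off the two pieces. If one wished to add substance, I would remark that the error genuinely vanishes as $x \to a^+$ like $\left((x^\rho-a^\rho)/\rho\right)^{(m+1)\alpha}$, provided $\C{(m+1)\alpha}{a^+}f$ stays bounded near $a$ (guaranteed by its continuity on $[a,b]$); this quantifies the quality of the M\"untz-type approximation and explains why higher $m$ improves accuracy near the base point $a$.
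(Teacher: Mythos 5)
Your proof is correct and matches the paper exactly: the paper simply states that Proposition \ref{prop_approx_function} follows directly from Theorem \ref{thm_DL}, which is precisely your argument of identifying $\Lambda_m$ with the finite sum and $e_m$ with the Lagrange-type remainder. Your added remarks on the meaning of $\simeq$ and the vanishing of the error as $x \to a^+$ are sound but not needed.
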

The proof of proposition \ref{prop_approx_function} follows directly from Theorem \ref{thm_DL}. 
\begin{rmrk}
When $a=0$, Proposition \ref{prop_approx_function} gives the Taylor expansion of $f$ in terms of M\"u{}ntz Polynomials \cite{Alm07,Bor94}, which are a generalization of the standard polynomials to real (non necessarily integer) exponent sequences. In our case, these exponents take the form $\lambda_j = \beta\,j$ where $\beta=\rho\,\alpha >0$ and  $j=0,\, 1, \dots m$. Let us notice that the particular case when $\rho = 1$ (i.e. $0 < \beta \leq 1$) has been studied in \cite{Odi07}.
\end{rmrk}
Now, we apply the previous approximation to some classical functions.\\

\noindent \textbf{Example 1:} Consider $f(x) = \exp(x^\beta)$ with $\beta>0$.\\

The simplest way to find the generalized Taylor expansion of $f$ in the neighborhood of $a=0$ is to choose $\rho = \beta$ and $\alpha =1$ in equation \eqref{Muntz-expansion}. Using Remark \ref{rem_Cjalpha} we find
\begin{align*}
\C{j}{0^+}f(x) = \left(\gamma^jf\right)(x) = \rho^jf(x) = \beta^j\,\exp(x^\beta),
\end{align*}
and hence $\C{j}{0^+}f(0) = \beta^j$. It follows
\begin{align*}
f(x) & = \sum_{j=0}^m \left(\dfrac{x^\beta}{\beta}\right)^j\dfrac{\beta^j}{\Gamma(j+1)} + e_m(x) \\
     & = \sum_{j=0}^m \dfrac{x^{\beta j}}{j!} + \dfrac{x^{\beta(m+1)}}{(m+1)!}\exp(\xi^\beta)
\end{align*}
with $\xi \in\, ]0,x[$.\\

\noindent \textbf{Example 2:} The generalized Mittag-Leffler function \cite{Kil06} is defined for complex $z\in\IC$, $\lambda$, $\mu$, $\nu \in \IC$ with $Re(\lambda)>0$ by
\begin{align*}
E_{\lambda,\mu}^\nu(z) = \sum_{k=0}^\infty \dfrac{(\nu)_k}{\Gamma(\lambda k+\mu)}\dfrac{z^k}{k!}
\end{align*}
where $(\nu)_k$ is the Pochhammer symbol. This function (and particularly the classical Mittag-Leffler function $E_\lambda:=E^1_{\lambda,1}$) is a powerful tool to solve fractional differential equations \cite{Pod98,Sch96}. Using equations \eqref{Muntz-expansion} and \eqref{Muntz-error} one can deduce that for $\beta>0$
\begin{align*}
E_{\lambda,\mu}^\nu(z^\beta) = \left(\sum_{j=0}^m\left(\dfrac{z^{\beta}}{\beta}\right)^j\dfrac{1}{\Gamma(j+1)} \sum_{k=0}^\infty \dfrac{(\nu)_k}{\Gamma(\lambda k+\mu)}\dfrac{\mathcal{C}^{j,\beta}_{0^+}f_k(0)}{k!}\right) + e_m(z)
\end{align*}
where $f_k(z) = z^{\beta k}$. A direct computation (or using \ref{Appendix_gamma}) shows that 
\begin{align*}
\mathcal{C}^{j,\beta}_{0^+}f_k(z) = (\gamma^jf_k)(z) = \left\{
\begin{array}{ll}
\beta^j \,\dfrac{\Gamma(k+1)}{\Gamma(k-j+1)} z^{\beta(k-j)} & \text{ if }\ j\leq k \\
0 & \text{ if }\ j > k .
\end{array}
\right.
\end{align*}
Hence
\begin{align*}
\mathcal{C}^{j,\beta}_{0^+}f_k(0) = \beta^k\,\Gamma(k+1)\delta_{jk}
\end{align*}
 with $\delta_{jk}$ is the Kronecker delta. Using the identity $(\nu)_{p+q} = (\nu)_p\,(\nu+p)_q$, for $p,q\in \IN_0$, one can deduce after simplifications that
\begin{align*}
E_{\lambda,\mu}^\nu(z^\beta) = \sum_{j=0}^m \dfrac{(\nu)_j}{\Gamma(\lambda j+\mu)} \dfrac{z^{\beta j}}{j!} + \dfrac{(\nu)_{m+1}}{(m+1)!}\,z^{\beta(m+1)}\,E_{\lambda,\lambda(m+1)+\mu}^{\nu+m+1}(\xi^\beta)
\end{align*}
with $\xi\in\, ]0,x[$.\\

\noindent \textbf{Example 3:} The generalized Hypergeometric series \cite{Kil06} are defined by
\begin{align*}
{}_pF_q(a_1,\dots,a_p;b_1,\dots,b_q;z) = \sum_{k=0}^\infty \dfrac{(a_1)_k\dots (a_p)_k}{(b_1)_k\dots (b_q)_k}\dfrac{z^k}{k!}
\end{align*}
where $a_i,b_j\in\IC$, $b_j\neq 0, -1, -2, \dots$, ($i=1\dots p$, $j=1\dots q$). This series
is absolutely convergent for all values of $z\in \IC$ if $p < q$. When $p = q + 1$, the series is absolutely convergent for $|z| < 1$. Using the same arguments of example 2, one can obtain for $\beta > 0$
\begin{align*}
& {}_pF_q(a_1,\dots,a_p;b_1,\dots,b_q;z^\beta) = \sum_{j=0}^m \dfrac{(a_1)_j\dots (a_p)_j}{(b_1)_j\dots (b_q)_j}\dfrac{z^{\beta j}}{j!} \ + \\
& \ \dfrac{(a_1)_{m+1}\dots (a_p)_{m+1}}{(b_1)_{m+1}\dots (b_q)_{m+1}}\dfrac{z^{\beta(m+1)}}{(m+1)!}\ {}_pF_q(a_1+m+1,\dots,a_p+m+1;b_1+m+1,\dots,b_q+m+1;\xi^{\beta})
\end{align*}
with $\xi\in\, ]0,x[$.

\section{Series solutions to fractional differential equations}\label{sec_series}
In this section, we investigate the solutions of some fractional differential equations (fde) in terms of series. The following result will be helpful.
\begin{prop}\label{prop_der-poly}
Let $Re(\alpha) > 0$ and $n = \lfloor Re(\alpha) \rfloor + 1$. Then
\begin{align*}
&\C{\alpha}{a^+}\left(\dfrac{x^\rho-a^\rho}{\rho}\right)^\beta = \dfrac{\Gamma(1+\beta)}{\Gamma(1+\beta-\alpha)}\left(\dfrac{x^\rho-a^\rho}{\rho}\right)^{\beta-\alpha},& Re(\beta) > n-1.\\
&\C{\alpha}{a^+}\left(\dfrac{x^\rho-a^\rho}{\rho}\right)^k = 0, & k=0,\, 1,\dots,\, n-1.
\end{align*}
\end{prop}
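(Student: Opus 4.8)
The plan is to reduce everything to two elementary facts: the action of the operator $\gamma = x^{1-\rho}\frac{d}{dx}$ on the generalized monomials, and the value of the generalized fractional integral of such a monomial. Throughout I would abbreviate $g_\beta(x) := \left(\frac{x^\rho-a^\rho}{\rho}\right)^\beta$.

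First I would compute the action of $\gamma$. By the chain rule, $\gamma g_\beta = x^{1-\rho}\frac{d}{dx}\,g_\beta = \beta\, g_{\beta-1}$, because the factor $x^{\rho-1}$ produced by differentiating $x^\rho-a^\rho$ cancels the $x^{1-\rho}$ sitting in $\gamma$. Iterating gives $\gamma^n g_\beta = \beta(\beta-1)\cdots(\beta-n+1)\, g_{\beta-n} = \frac{\Gamma(\beta+1)}{\Gamma(\beta-n+1)}\, g_{\beta-n}$. The hypothesis $Re(\beta)>n-1$ is exactly what guarantees $g_\beta \in AC^n_\gamma[a,b]$, so that the Caputo-type derivative is defined and the representation \eqref{cap_der_left_int_reel} of Proposition \ref{prop_gen_der_express} is available.

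Next I would establish the power rule for the generalized integral, namely $\I{\mu}{a^+} g_\gamma = \frac{\Gamma(\gamma+1)}{\Gamma(\gamma+\mu+1)}\, g_{\gamma+\mu}$ whenever $Re(\gamma)>-1$ and $Re(\mu)>0$. Starting from the definition \eqref{left_int} and substituting $\tau^\rho = a^\rho + (x^\rho-a^\rho)t$ (so that $\rho\tau^{\rho-1}\,d\tau = (x^\rho-a^\rho)\,dt$ and the singular factor becomes $((x^\rho-a^\rho)(1-t))^{\mu-1}$), the integral collapses to the Beta integral $\int_0^1 t^{\gamma}(1-t)^{\mu-1}\,dt = B(\gamma+1,\mu) = \frac{\Gamma(\gamma+1)\Gamma(\mu)}{\Gamma(\gamma+\mu+1)}$, while tracking the powers of $\rho$ and of $(x^\rho-a^\rho)$ reassembles $g_{\gamma+\mu}$. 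This Beta-integral evaluation is the main computation, and its convergence at the lower endpoint requires precisely $Re(\gamma)>-1$, which in the application below reads $Re(\beta)>n-1$.

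Finally I would combine the two ingredients. For $\alpha\notin\IN_0$ (where $n=\lfloor Re(\alpha)\rfloor+1=\lfloor Re(\alpha)\rceil$), Proposition \ref{prop_gen_der_express} gives $\C{\alpha}{a^+}g_\beta = \I{n-\alpha}{a^+}(\gamma^n g_\beta)$; inserting the first two steps with $\mu=n-\alpha$ and exponent $\gamma=\beta-n$, the factors $\Gamma(\beta-n+1)$ telescope and leave $\frac{\Gamma(\beta+1)}{\Gamma(\beta-\alpha+1)}\,g_{\beta-\alpha}$, which is the first identity; the integer case $\alpha=m$ follows at once from $\C{m}{a^+}=\gamma^m$ together with the first paragraph. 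For the second identity, let $k\in\{0,\dots,n-1\}$ be an integer: since $\gamma$ lowers the exponent by one and annihilates constants, $\gamma^{k+1}g_k=0$, and because $k+1\leq n$ we get $\gamma^n g_k=0$, whence $\C{\alpha}{a^+}g_k=\I{n-\alpha}{a^+}(0)=0$. The only real care needed is the domain and convergence bookkeeping, i.e.\ checking $g_\beta\in AC^n_\gamma$ and the endpoint convergence of the Beta integral, since the remainder is pure Gamma-function algebra.
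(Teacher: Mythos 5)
Your main line of argument is correct, and it actually supplies what the paper omits: the paper's ``proof'' of Proposition \ref{prop_der-poly} is the single line ``See Lemma 3.8 and 3.9 in \cite{Jar17}'', so there is no internal derivation to compare against. What you wrote is the standard route that such a proof must take: the commutation $\gamma g_\beta=\beta g_{\beta-1}$, hence $\gamma^n g_\beta=\tfrac{\Gamma(\beta+1)}{\Gamma(\beta-n+1)}g_{\beta-n}$; the power rule $\I{\mu}{a^+}g_\sigma=\tfrac{\Gamma(\sigma+1)}{\Gamma(\sigma+\mu+1)}g_{\sigma+\mu}$ obtained from the substitution $\tau^\rho=a^\rho+(x^\rho-a^\rho)t$ and the Beta integral; and the composition through $\C{\alpha}{a^+}=\I{n-\alpha}{a^+}\gamma^n$, i.e.\ \eqref{cap_der_left_int_reel}. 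Your convergence bookkeeping is also right: the Beta integral converges at its lower endpoint precisely when $Re(\beta-n)>-1$, which is the hypothesis $Re(\beta)>n-1$, and the same condition gives $g_\beta\in AC^n_\gamma[a,b]$. For non-integer $\alpha$ this is a complete and correct proof of both identities.

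The one genuine gap is the integer case of the \emph{second} identity. The representation \eqref{cap_der_left_int_reel} that you invoke is stated in Proposition \ref{prop_gen_der_express} only for $\alpha\notin\IN_0$; for $\alpha=m\in\IN$ one has instead $\C{m}{a^+}f=\gamma^m f$ (with the convention $n=\lfloor Re(\alpha)\rceil=m$ of that proposition), so your chain $\C{\alpha}{a^+}g_k=\I{n-\alpha}{a^+}(\gamma^n g_k)=0$ is not justified there. In fact no argument can close this case as the statement is written: Proposition \ref{prop_der-poly} takes $n=\lfloor Re(\alpha)\rfloor+1=m+1$ when $\alpha=m$, so its second identity would include $k=m$, yet $\C{m}{a^+}g_m=\gamma^m g_m=m!\neq 0$. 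This is an inconsistency in the proposition itself: it uses $n=\lfloor Re(\alpha)\rfloor+1$ where the definition of $\C{\alpha}{a^+}$ and Proposition \ref{prop_gen_der_express} use $n=\lfloor Re(\alpha)\rceil$, and the two conventions agree only for $\alpha\notin\IN_0$. The clean repair is either to restrict the second identity to $\alpha\notin\IN_0$ (where your proof stands as is), or to restate it with $n=\lfloor Re(\alpha)\rceil$, in which case the integer case follows directly from $\C{m}{a^+}g_k=\gamma^m g_k=0$ for $k\leq m-1$, by the same computation as in your first paragraph; you should say one of these explicitly rather than run the integral representation for all $\alpha$.
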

\begin{proof}
See Lemma 3.8 and 3.9 in \cite{Jar17}
\end{proof}

\noindent \textbf{Example 1:} Consider the following fde
\begin{align}\label{fde_example1}
\left\{
\begin{array}{l}
\C{\alpha}{a^+}u(x) = \lambda\,u(x), \ x > a\\
u(a) = u_a,
\end{array}
\right.
\end{align}
where $0 < \alpha \leq 1$, $a \geq 0$, $\rho>0$, $\lambda\in \IR-\{0\}$, $u_a\in \IR$ and $u$ is the unknown function. Suppose that $u$ can be written in series form
\begin{align}\label{express_u}
u(x) = \sum_{n=0}^{+\infty} c_n\,(x^\rho-a^\rho)^{n\alpha}
\end{align}
Using Proposition \ref{prop_der-poly}, we obtain
\begin{align}
\C{\alpha}{a^+}u(x) & = \rho^\alpha\sum_{n=1}^{+\infty} c_n\,\dfrac{\Gamma(1+n\alpha)}{\Gamma(1+(n-1)\alpha)}(x^\rho-a^\rho)^{(n-1)\alpha} \nonumber \\
& = \rho^\alpha\sum_{n=0}^{+\infty} c_{n+1}\,\dfrac{\Gamma(1+(n+1)\alpha)}{\Gamma(1+n\alpha)}(x^\rho-a^\rho)^{n\alpha}. \label{Calpha_u}
\end{align}
Plugging equation \eqref{Calpha_u} into \eqref{fde_example1} yields
\begin{align*}
c_{n+1} = \rho^{-\alpha} \dfrac{\Gamma(1+n\alpha)}{\Gamma(1+(n+1)\alpha)}\lambda\,c_n, \quad \forall\ n \in \IN_0
\end{align*}
which immediately gives
\begin{align}\label{cn}
c_n = \left(\rho^{-\alpha} \lambda\right)^n \dfrac{\Gamma(1)}{\Gamma(1+n\alpha)} c_0.
\end{align}
Using the initial condition $c_0 = u_a$, we finally get by \eqref{express_u} and \eqref{cn}
\begin{align*}
u(x) = u_a\,E_\alpha{\Big(}\lambda \rho^{-\alpha} (x^\rho-a^\rho)^\alpha{\Big)}
\end{align*}
where $E_\alpha$ is the Mittag--Leffler function.\\

\noindent \textbf{Example 2:} Consider the following fde
\begin{align}\label{fde_example2}
\left\{
\begin{array}{l}
\C{2\alpha}{0^+}u(x) -2\,\C{\alpha}{0^+}u(x) + u(x) = 0, \ x > 0\\
u(0) = u_0, \ \ \C{\alpha}{0^+}u(0) = u_1
\end{array}
\right.
\end{align}
where $0 < \alpha \leq 1$, $\rho>0$, $u_0$ and $u_1$ in $\IR$ and $u$ is the unknown function. Suppose that $u$ can be written in series form
\begin{align}\label{express_u_2}
u(x) = \sum_{n=0}^{+\infty} c_n\,(x^\rho)^{n\alpha}
\end{align}
Using Proposition \ref{prop_der-poly}, we obtain
\begin{align}
\C{\alpha}{0^+}u(x) & = \rho^\alpha\sum_{n=0}^{+\infty} c_{n+1}\,\dfrac{\Gamma(1+(n+1)\alpha)}{\Gamma(1+n\alpha)}(x^\rho)^{n\alpha} \label{Calpha_u_2}
\end{align}
and
\begin{align}
\C{2\alpha}{0^+}u(x) := \C{\alpha}{0^+}\left(\C{\alpha}{0^+}u\right)(x)  = \rho^{2\alpha}\sum_{n=0}^{+\infty} c_{n+2}\,\dfrac{\Gamma(1+(n+2)\alpha)}{\Gamma(1+n\alpha)}(x^\rho)^{n\alpha}. \label{C2alpha_u_2}
\end{align}
Plugging equations \eqref{Calpha_u_2} and \eqref{C2alpha_u_2} into \eqref{fde_example2} yields
\begin{align*}
\rho^{2\alpha} c_{n+2}\dfrac{\Gamma(1+(n+2)\alpha)}{\Gamma(1+n\alpha)} - 2\,\rho^{\alpha} \dfrac{\Gamma(1+(n+1)\alpha)}{\Gamma(1+n\alpha)}+c_n = 0, \quad \forall\ n \in \IN_0.
\end{align*}
Denote $d_n = c_n\,\Gamma(1+n\alpha)$. It follows
\begin{align*}
\rho^{2\alpha} d_{n+2} - \rho^{\alpha} d_{n+1} + d_n = 0, \quad \forall\ n \in \IN_0,
\end{align*}
which gives
\begin{align*}
d_n = \Big{(}d_0+\left(\rho^{\alpha}d_1-d_0\right)n\Big{)}\rho^{-n\alpha}
\end{align*}
or equivalently
\begin{align}\label{cn_2}
c_n = \dfrac{c_0+\left(\rho^{\alpha}\Gamma(1+\alpha)c_1-c_0\right)n}{\Gamma(1+n\alpha)}\rho^{-n\alpha}.
\end{align}
Using the initial conditions $u_0 = c_0$ and $u_1 = \rho^{\alpha}\Gamma(1+\alpha)c_1$, we finally get by \eqref{express_u_2} and \eqref{cn_2}
\begin{align*}
u(x) & = \sum_{n=0}^{+\infty} \dfrac{u_0+\left(u_1-u_0\right)n}{\Gamma(1+n\alpha)}\left(\dfrac{x^\rho}{\rho}\right)^{n\alpha}\\
& = u_0\sum_{n=0}^{+\infty} \dfrac{\left(\dfrac{x^\rho}{\rho}\right)^{n\alpha}}{\Gamma(1+n\alpha)} 
+ \dfrac{u_1-u_0}{\alpha}\sum_{n=1}^{+\infty} \dfrac{\left(\dfrac{x^\rho}{\rho}\right)^{n\alpha}}{\Gamma(n\alpha)}\\
& = u_0\sum_{n=0}^{+\infty} \dfrac{\left(\dfrac{x^\rho}{\rho}\right)^{n\alpha}}{\Gamma(1+n\alpha)} 
+ \dfrac{u_1-u_0}{\alpha}\left(\dfrac{x^\rho}{\rho}\right)\sum_{n=0}^{+\infty} \dfrac{\left(\dfrac{x^\rho}{\rho}\right)^{n\alpha}}{\Gamma(n\alpha+\alpha)}\\
& = u_0\,E_\alpha\left(\rho^{-\alpha}x^{\rho\alpha}\right) + \dfrac{u_1-u_0}{\alpha}\left(\dfrac{x^\rho}{\rho}\right)E_{\alpha,\alpha}\left(\rho^{-\alpha}x^{\rho\alpha}\right).
\end{align*}

\appendix

\section{Appendix}\label{Appendix_gamma}
The evaluation of the term $\C{j\alpha}{a^+}f(a)$, $j\in \IN$, involves the computation of $\left(\gamma^jf\right)(a)$ with $\gamma^j = \underbrace{\gamma\circ \dots \circ \gamma}_{j\, \text{times}}$ and $\gamma = x^{1-\rho}\,\dfrac{d}{dx}$ (see Remark \ref{rem_Cjalpha}). We provide in the following lemma an algorithm to explicitly evaluate $\gamma^jf$ by means of the the classical derivatives of $f$ up to the $j$\up{th} order in case these latter exist.
\begin{lem}\label{lem_DL}
Let $j\in \IN$ and suppose $f$ is $j$ times continuously derivable in $a\in \IR$. Then
\begin{align}\label{gamma_jf}
\left(\gamma^jf\right)(a) = \lim_{x\to a^+} \sum_{i=1}^j \lambda_{i,j}\,x^{i-j\rho}f^{(i)}(x)
\end{align}
where $(\lambda_{i,j})_{\substack{0\leq i \leq m \\ 1\leq j \leq m}}$ is the sequence given recursively by:
\begin{align*}
\lambda_{i,j} = \left\{
\begin{array}{ll}
0 & \ \text{ if }\ \ i=0 \ \text{ or }\ i > j \\
1 & \ \text{ if }\ \ i = j \\
\lambda_{i-1,j-1} + \Big{(}i-(j-1)\rho\Big{)}\,\lambda_{i,j-1} & \ \text{ if }\ \ 1\leq i < j.
\end{array}
\right.
\end{align*}
\end{lem}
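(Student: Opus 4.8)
The plan is to prove by induction on $j$ the stronger pointwise identity
\begin{align*}
\left(\gamma^j f\right)(x) = \sum_{i=1}^j \lambda_{i,j}\,x^{i-j\rho}\,f^{(i)}(x),
\end{align*}
valid for every $x>0$ at which $f$ is $j$ times differentiable, and then to obtain \eqref{gamma_jf} by letting $x\to a^+$. The latter passage is legitimate because the right-hand side is continuous in $x$ and $f$ is assumed to be $j$ times continuously differentiable near $a$; writing the result as a limit rather than as a plain evaluation is precisely what accommodates the degenerate case $a=0$, where the powers $x^{i-j\rho}$ need not extend continuously to the endpoint.

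For the base case $j=1$ one has directly $\gamma f(x) = x^{1-\rho} f'(x)$, which is the asserted formula with the single coefficient $\lambda_{1,1}=1$. For the inductive step I would assume the identity for some $j\geq 1$, apply $\gamma = x^{1-\rho}\,\dfrac{d}{dx}$ to both sides, and expand each summand by the product rule:
\begin{align*}
x^{1-\rho}\dfrac{d}{dx}\left(x^{i-j\rho} f^{(i)}\right) = (i-j\rho)\,x^{i-(j+1)\rho} f^{(i)} + x^{(i+1)-(j+1)\rho} f^{(i+1)}.
\end{align*}
Summing over $i$ produces two sums; after shifting the index in the second (replacing $i$ by $i-1$) so that both are expressed through the monomials $x^{i-(j+1)\rho} f^{(i)}$, the coefficient of this monomial becomes $(i-j\rho)\lambda_{i,j} + \lambda_{i-1,j}$, which is exactly $\lambda_{i,j+1}$ according to the stated recursion.

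The only point requiring genuine care is the matching of the two endpoints of the shifted sums, and this is where I expect the main (indeed the only) subtlety to lie. I would verify separately that the top term $i=j+1$ receives only the contribution coming from $\lambda_{j,j}=1$, which forces $\lambda_{j+1,j+1}=1$ in agreement with the case $i=j$ of the recursion, and that the bottom term $i=1$ is covered by the convention $\lambda_{0,j}=0$ (the case $i=0$), so that no spurious term in $f^{(0)}=f$ survives. Once these two boundary conventions are checked, the coefficient relations coincide with the defining recursion for all admissible $i$, the induction closes, and taking $x\to a^+$ delivers \eqref{gamma_jf}. The remainder of the argument is a routine product-rule computation with no analytic difficulty.
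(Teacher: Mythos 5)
Your proof is correct and follows essentially the same route as the paper's: induction on $j$, applying $\gamma$ to the inductive hypothesis, expanding by the product rule, shifting the index in the second sum, and invoking the conventions $\lambda_{0,j}=0$ and $\lambda_{j+1,j}=0$ to absorb the boundary terms into the recursion. Your added remark that the pointwise identity holds for $x>0$ and that the limit $x\to a^+$ is what accommodates the case $a=0$ is a small clarification the paper leaves implicit, but it does not change the argument.
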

\begin{proof}
We prove \eqref{gamma_jf} by induction. The result is trivial for $j=1$. Moreover, we have
\begin{align*}
\left(\gamma^{j+1}f\right)(x) & = \gamma\left(\gamma^jf\right)(x) \\
& = x^{1-\rho}\,\dfrac{d}{dx}\left(\sum_{i=1}^j \lambda_{i,j}\,x^{i-j\rho}f^{(i)}(x)\right)\\
& = x^{1-\rho}\left(\sum_{i=1}^j (i-j\rho)\,\lambda_{i,j}\,x^{i-j\rho-1}f^{(i)}(x) + \sum_{i=1}^j \lambda_{i,j}\,x^{i-j\rho}f^{(i+1)}(x)\right)\\
& = \sum_{i=1}^j (i-j\rho)\,\lambda_{i,j}\,x^{i-(j+1)\rho}f^{(i)}(x) + \sum_{i=2}^{j+1} \lambda_{i-1,j}\,x^{i-(j+1)\rho}f^{(i)}(x)\\
& = \sum_{i=1}^{j+1} \left(\lambda_{i-1,j} + (i-j\rho)\,\lambda_{i,j}\right)x^{i-(j+1)\rho}f^{(i)}(x) \ - \underbrace{\lambda_{0,j}}_{=\ 0}\,x^{1-(j+1)\rho}f'(x)\\
& \quad -(j+1-j\rho)\,\underbrace{\lambda_{j+1,j}}_{=\ 0}\,x^{(j+1)(1-\rho)}f^{(j+1)}(x) \\
& = \sum_{i=1}^{j+1} \lambda_{i,j+1}\,x^{i-(j+1)\rho}f^{(i)}(x),
\end{align*}
and the proof is completed.
\end{proof}



\begin{thebibliography}{00}


\bibitem{Add01}{F. Adda and J. Cresson, About Non-differentiable Functions, J. Math. Anal. Appl., \textbf{263}, (2001) 721--737.}

\bibitem{Alm07}{J. M. Almira, M\"u{}ntz type theorems: I, Surv. Approx. Theory, \textbf{3}, (2007) 152--194.}

\bibitem{Bor94}{P. Borwein, T. Erdelyi and J. Zhang, M\"u{}ntz systems and orthogonal M\"u{}ntz-Legendre polynomials, Trans. Amer. math. Soc., \textbf{342}(2), (1994) 523--542.}

\bibitem{Cap67}{M. Caputo, Linear models of dissipation whose Q is almost frequency independent
II, Geophys. J. Royal Astronom. Soc., \textbf{13}, (1967) 529--539.}

\bibitem{Gam14}{Y. Gambo, F. Jarad, D. Baleanu and  T. Abdeljawad, On Caputo modification of the Hadamard fractional derivatives, Adv. Differ. Equ., (2014) 2014:10, https://doi.org/10.1186/1687-1847-2014-10.}
%
\bibitem{Her14}{R. Herrmann, Fractional Calculus: An Introduction for Physicists. World Scientific Publishing, Singapore, 2\up{nd} edition, (2014).}

\bibitem{Hil00}{R. Hilfer, Applications Of Fractional Calculus In Physics. World Scientific Publishing, Singapore, (2000).}
%
\bibitem{Jar17}{F. Jarad, T. Abdeljawad and D. Baleanu, On the generalized fractional derivatives and their Caputo modification, J. Nonlinear Sci. Appl., \textbf{10}, (2017) 2607--2619.}
%
\bibitem{Kat11}{U. N. Katugampola, New approach to a generalized fractional integral, Appl. Math. Comput., \textbf{218}, (2011) 860--865.}
%
\bibitem{Kat14}{U. N. Katugampola, A New approach to generalized fractional derivatives, Bull. Math. Anal. Appl., \textbf{6}(4), (2014) 1--15.}
%
\bibitem{Kil06}{A. A. Kilbas, H. M. Srivastava and J. J. Trujillo, Theory and Applications of Fractional Differential Equations, Elsevier, Amsterdam, (2006).}
%
\bibitem{Luch07}{Y. Luchko and J. J. Trujillo, Caputo-type modification of the Erd\'elyi-Kober fractional derivative, Frac. Calc. Appl. Anal., \textbf{10}(3), (2007) 249--267.}
%
\bibitem{Mach11}{J. A. Machado, And I say to myself: "what a fractional world!". Frac. Calc. Appl. Anal., \textbf{14}(4), (2011) 635--654.}
%
\bibitem{Odi07}{Z. M. Odibat and N. T. Shawagfeh, Generalized Taylor's formula, Appl. Math. Comput., \textbf{186}, (2007) 286--293.}
%
\bibitem{Old74}{K. B. Oldham and J. Spanier, The fractional calculus. Academic Press, New York, (1974).}
%
\bibitem{Pod98}{I. Podlubny, Fractional Differential Equations: An Introduction to Fractional Derivatives, Fractional Differential Equations, to Methods of Their Solution and Some of Their Applications, \textbf{198}, 1st Edition, Academic Press, (1998)}
%
\bibitem{Pod99}{I. Podlubny, Fractional Differential Equations. Academic Press, San Diego, California, (1999).}
%
\bibitem{Sam93}{S. G. Samko, A. A. Kilbas and O. I. Marichev, Fractional Integrals and Derivatives: Theory and Applications, Gordon and Breach Science Publishers, Switzerland, (1993).}
%
\bibitem{Sch96}{W. R. Schneider, Completely monotone generalized Mittag--Leffler functions, Exposition. Math., \textbf{14}(1), (1996) 3--16.}

\end{thebibliography}



\end{document}